\documentclass{article}
\usepackage[utf8]{inputenc}
\pdfoutput=1


\usepackage[margin=1.25in]{geometry}

\usepackage{amsmath,color,graphicx,amssymb}
\usepackage{amsthm}
\usepackage{amsmath}
\usepackage{tikz}
\usetikzlibrary{calc,matrix,arrows,shapes.geometric,positioning,decorations.pathmorphing}
\usepackage{tikz-cd}

\usepackage{float}

\newtheorem{thm}{Theorem}[section] 

\newtheorem{lemma}[thm]{Lemma}     

\newtheorem{cor}[thm]{Corollary}

\newtheorem{prop}[thm]{Proposition}

\theoremstyle{definition}
\newtheorem{defn}[thm]{Definition}

\theoremstyle{definition}
\newtheorem{ex}[thm]{Example}

\theoremstyle{definition}
\newtheorem{remk}[thm]{Remark}

\theoremstyle{definition}
\newtheorem{constr}[thm]{Construction}

\theoremstyle{definition}
\newtheorem{quest}[thm]{Question}

\allowdisplaybreaks

\title{\large{\uppercase{Koszul Homology of Quotients by Edge Ideals}}}
\author{\normalsize{\uppercase{Rachel N. Diethorn}}}
\date{}

\begin{document}

\maketitle

\begin{abstract}
We show that the Koszul homology algebra of a quotient by the edge ideal of a forest is generated by the lowest linear strand.  This provides a large class of Koszul algebras whose Koszul homology algebras satisfy this property.  We obtain this result by constructing the minimal graded free resolution of a quotient by such an edge ideal via the so called iterated mapping cone construction and using the explicit bases of Koszul homology given by Herzog and Maleki.  Using these methods we also recover a result of Roth and Van Tuyl on the graded Betti numbers of quotients of edge ideals of trees.     
\end{abstract}

\section{Introduction}
\paragraph
\indent  Let $k$ be a field and let $R=\bigoplus_{i\geq 0}R_i$ be a standard graded $k$-algebra.  Let $K(R)$ be the Koszul complex on a minimal set of generators of $R_1$.  It is well-known that the differential graded algebra structure on $K(R)$ induces a $k$-algebra structure on its homology, $H(R)$, see for example \cite[1.3]{MR2641236}.  This algebra structure on Koszul homology holds important information about the ring $R$.  For example, $R$ is a complete intersection if and only if $H(R)$ is generated by $H_1(R)$ as a $k$-algebra \cite[Thm 2.7]{assmus}, $R$ is Gorenstein if and only if $H(R)$ satisfies Poincare duality \cite{avramovgolod}, and $R$ is Golod if and only if $K(R)$ admits a trivial Massey operation \cite[Thm 5.2.2]{MR2641236}.  
\newline
\indent
Another property of $R$ that has strong connections to the structure of $H(R)$ is the Koszul property.  $R$ is said to be \textit{Koszul} if $k$ has a linear resolution over $R$.  To discuss the connections between $R$ and $H(R)$ when $R$ is Koszul, one views the Koszul homology algebra $H(R)=\bigoplus_{i,j}H_i(R)_j$ as a bigraded algebra, where $i$ is the homological degree and $j$ is the internal degree given by the grading on $R$.  If $R$ is Koszul, then it is known that $H_i(R)_j=0$ for all $j>2i$ \cite[Thm 3.1]{MR2644369}, that $H_i(R)_{2i}=(H_1(R)_2)^i$ \cite[Thm 5.1]{MR3302628}, and that $H_i(R)_{2i-1}=(H_1(R)_2)^{i-2}H_2(R)_3$ \cite[Thm 3.1]{boocheretal}.  These results show that certain parts of $H(R)$ are generated by the lowest linear strand when $R$ is Koszul.  Avramov asked the following question regarding this behavior.

\begin{quest}\label{quest}
If $R$ is Koszul, is the Koszul homology algebra of $R$ generated as a $k$-algebra by the lowest linear strand?  That is, is $H(R)$ generated by $\bigoplus_i H_i(R)_{i+1}$?
\end{quest}

The answer to this question is negative in general.  The authors of \cite{boocheretal} show that the Koszul homology algebra of the quotient by the edge ideal of an $n$-cycle where $n\equiv 1(\text{mod}\,3)$ is not generated by the lowest linear strand.  However, interest lies in determining for which Koszul algebras, this question has a positive answer.  The answer is positive for the Koszul homology algebra of the quotient by the edge ideal of an $n$-path \cite[Thm 3.15]{boocheretal}   and for the Koszul homology algebra of the second Veronese algebra \cite[Cor 2.4]{2017arXiv171004293C}.  Still the question remains open for many classes of algebras known to be Koszul.

In this paper, we give a positive answer to this question for a large class of edge ideals.  Let $Q=k[x_1,...,x_n]$ be a standard graded polynomial ring over any field $k$ and let $I$ be an edge ideal associated to a tree.  We show that the Koszul homology algebra of the quotient $R=Q/I$ is generated by the lowest linear strand.  This result extends easily to edge ideals of forests and our result recovers \cite[Thm 3.15]{boocheretal}.  To obtain this result, we utilize the so called iterated mapping cone construction and the explicit $k$-bases of each $H_i(R)$ given by Herzog and Maleki in \cite[Thm 1.3]{Herzog2018}.

We now outline the contents of this paper.  In Section 2, we recall some important terminology which we use throughout the paper and we discuss the main tools we use in our results, including the iterated mapping cone construction, multiplicative structures on resolutions, and explicit bases for the Koszul homology modules from \cite{Herzog2018}.  In Section 3, we construct the minimal graded free resolution of $Q/I$ over $Q$ which we use in the proof of our main result.  We also recover a result of Roth and Van Tuyl in \cite{rothvantuyl} on the Betti numbers of such quotients $Q/I$.  In Section 4, we state and prove the main result.

\section{Preliminaries}
\paragraph
\indent  In this section, we set up the basic terminology which we use throughout the paper and discuss the main tools we use to obtain our results.  Let $Q=k[x_1,...,x_n]$ be a standard graded polynomial ring over a field $k$.  
\newline
\indent  We begin by recalling the notion of graded Betti numbers.  We consider the minimal graded free resolution $F$
\begin{align*}
...\longrightarrow\underset{j}{\bigoplus}Q(-j)^{\beta_{i,j}}\longrightarrow\underset{j}{\bigoplus}Q(-j)^{\beta_{i-1,j}}\longrightarrow...\longrightarrow\underset{j}{\bigoplus}Q(-j)^{\beta_{0,j}}
\end{align*}
of a $Q$-module $M$.  The \textit{$i$-th graded Betti number of internal degree $j$} is $\beta_{i,j}$.  The \textit{Betti table} of $M$ is given by
\begin{center}
\begin{tabular}{ |c|c c c c c  } 
 \hline
 \,\, & 0 & 1 & 2 & 3 & ...\\ 
 \hline
 0 & $\beta_{0,0}$ & $\beta_{1,1}$ & $\beta_{2,2}$ & $\beta_{3,3}$ & ... \\
 1 & $\beta_{0,1}$ & $\beta_{1,2}$ & $\beta_{2,3}$ & $\beta_{3,4}$ & ... \\ 
 2 & $\beta_{0,2}$ & $\beta_{1,3}$ & $\beta_{2,4}$ & $\beta_{3,5}$ & ... \\ 
 3 & $\beta_{0,3}$ & $\beta_{1,4}$ & $\beta_{2,5}$ & $\beta_{3,6}$ & ... \\ 
 \vdots & \vdots & \vdots & \vdots & \vdots & 
\end{tabular}
\end{center}
\indent Now we recall the following basic isomorphism, which we use throughout this paper.  Let $I$ be a homogeneous ideal in $Q$ and let $R=Q/I$.  Throughout this paper, we denote the homology of the Koszul complex $K(x_1,...,x_n;R)$ by $H(R)$.  If $F$ is the minimal graded free resolution of $R$ over $Q$, then there is an isomorphism of $k$-algebras
\begin{align}\label{eq:6}
\Phi:F\otimes k\rightarrow H(R).
\end{align}
Thus, given a basis $e_1^i,...,e_{b_i}^i$ of $F_i$, we have that the elements $\Phi(e_j^i\otimes\bar{1})$ for $j=1,...,b_i$,  form a basis for $H_i(R)$.  Furthermore, if deg\,$e_j^i=k$, then $\Phi(e_j^i\otimes\bar{1})\in H_i(R)_{k}$.  Given this isomorphism we can represent the Koszul homology algebra of $R$ as a table
\begin{center}
\begin{tabular}{ |c|c c c c c  } 
 \hline
 \,\, & 0 & 1 & 2 & 3 & ...\\ 
 \hline
 0 & $H_{0,0}$ & $H_{1,1}$ & $H_{2,2}$ & $H_{3,3}$ & ... \\
 1 & $H_{0,1}$ & $H_{1,2}$ & $H_{2,3}$ & $H_{3,4}$ & ... \\ 
 2 & $H_{0,2}$ & $H_{1,3}$ & $H_{2,4}$ & $H_{3,5}$ & ... \\ 
 3 & $H_{0,3}$ & $H_{1,4}$ & $H_{2,5}$ & $H_{3,6}$ & ... \\ 
 \vdots & \vdots & \vdots & \vdots & \vdots & 
\end{tabular}
\end{center}
where $H_{i,j}=H_i(R)_j$.  In this paper we often discuss the \textit{lowest linear strand} of $H(R)$, which is the second row (i.e. row 1) in the table above.

\subsection{Edge Ideals}
\paragraph
\indent Let $Q=k[x_1,...,x_n]$ be a standard graded polynomial ring over a field $k$.  We begin this subsection by recalling the notion of an edge ideal. 

\begin{defn}
Let $G$ be a simple graph (that is, with no loops nor multiple edges) on vertices $x_1,...,x_n$.  The \textit{edge ideal} associated to $G$ is the ideal
\begin{align*}
I_G=(x_ix_j\,|\,x_ix_j \,\text{is an edge in}\,\, G). 
\end{align*}
\end{defn} 

If $G$ is a graph on the variables of $Q$ and $G'$ is a subgraph of $G$, we write $I_{G'}$ for the edge ideal associated to $G'$ in $Q$.  The class of edge ideals we focus on in this paper is that of trees.  

\begin{defn}
Let $G$ be a simple graph.  $G$ is a \textit{tree} if $G$ is connected and contains no cycle.  Equivalently, $G$ is a \textit{tree} if every vertex in $G$ is connected by exactly one path.  A \textit{leaf} is a vertex in $G$ of degree $1$.  A \textit{forest} is a disjoint union of trees and a \textit{subforest} of a forest $G$ is a subgraph of $G$ which is a forest.  
\end{defn}

We illustrate the above definitions with the following example.

\begin{ex}
Let $Q=k[x_1,x_2,x_3,x_4,x_5,x_6,x_7]$ be a polynomial ring.  The edge ideal associated to the tree $G$ shown in Figure \ref{fig1} is $I_G=(x_1x_2,\,x_2x_3,\,x_2x_4,\,x_2x_5,\,x_3x_6,\,x_4x_7)$.
\end{ex}

\begin{figure}[H]
\vspace{-.5cm}
\centering
\includegraphics[width=6cm]{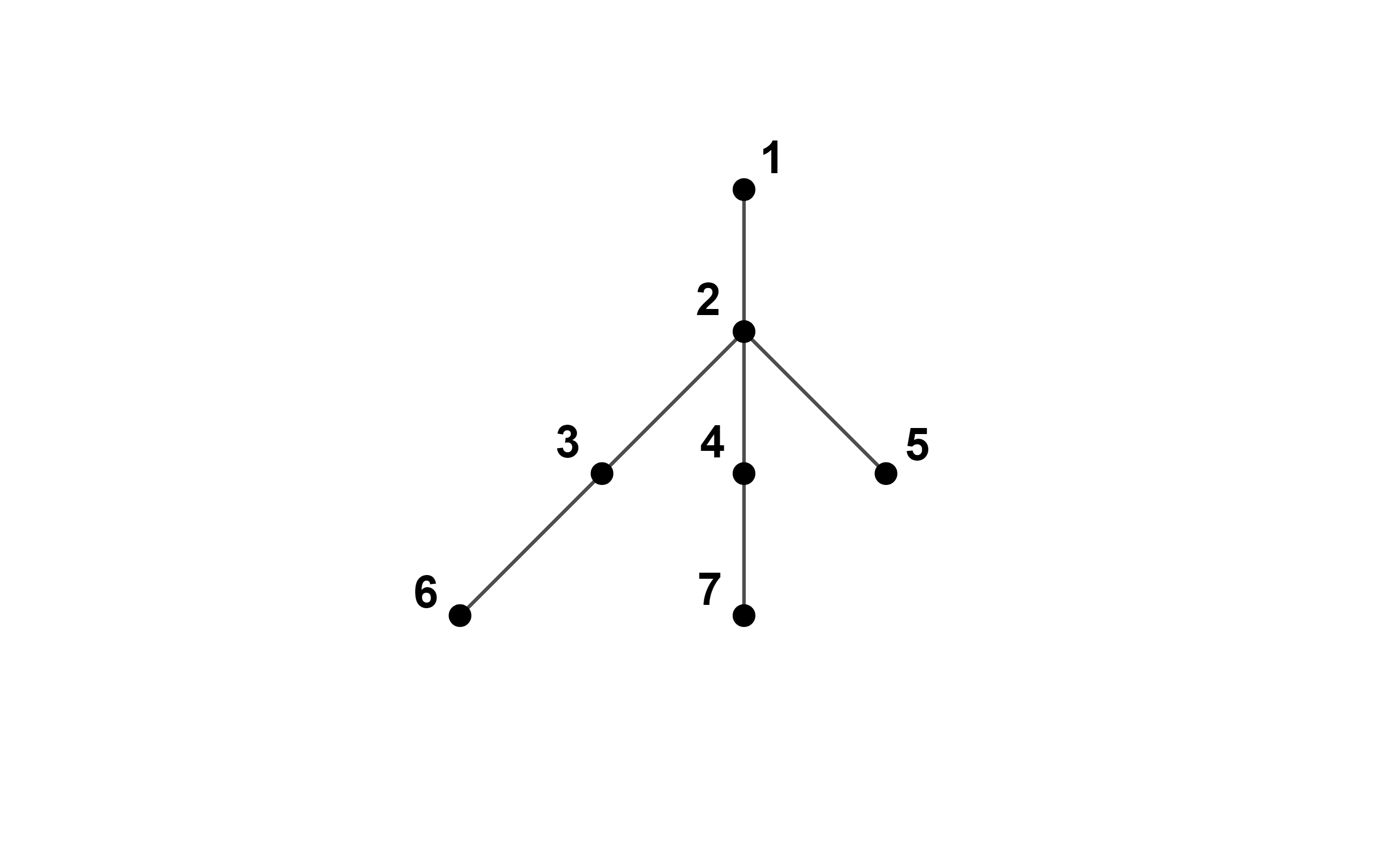}
\vspace{-.75cm}
\caption{The graph $G$ is a tree.}
\label{fig1}
\end{figure}

We make the following easy remarks about trees that will be useful throughout this paper.

\begin{remk}\label{treeremk}\hfill
\begin{itemize}
\item[(i)]  By definition, a tree $G$ must have a leaf, otherwise $G$ would contain a cycle.
\item[(ii)]  It is easy to see that any subgraph of a tree is a subforest.  
\end{itemize}
\end{remk}

In the following subsection, we discuss a way to obtain the minimal graded free resolution of a quotient by the edge ideal of a tree.

\subsection{Iterated Mapping Cones}

In this subsection, we discuss one of the main tools we use to obtain the results in this paper, namely the iterated mapping cone construction.  We begin by recalling the notion of a mapping cone.

\begin{defn}
Let $(F,\partial^F)$ and $(G,\partial^G)$ be two complexes of finitely generated $Q$-modules and let $\phi:F\rightarrow G$ be a map of complexes.  The \textit{mapping cone} of $\phi$, denoted $\text{cone}\,(\phi)$, is the complex $(\text{cone}\,(\phi),\partial)$ with
\begin{align*}
(\text{cone}\,(\phi))_i=G_i\oplus F_{i-1} \\
\partial_i=
\left[
\begin{array}{c c}
\partial_i^G & \phi_{i-1} \vspace{.12cm}\\
0 & -\partial_{i-1}^F \\
\end{array}
\right].
\end{align*}
\end{defn}

It is easy to see the following fact.

\begin{remk}\label{remksubcx}
If $\phi:F\rightarrow G$, then there is a short exact sequence
\begin{align*}
0\longrightarrow G\longrightarrow\mathrm{cone}(\phi)\longrightarrow F[-1]\longrightarrow 0.
\end{align*}
Thus, $G$ is a subcomplex of $\text{cone}\,(\phi)$. 
\end{remk}

Mapping cones can be used to build free resolutions of quotients by monomial ideals in the following way.  See, for example, \cite[Constr 27.3]{peeva} for more details.

\begin{constr}\label{coneconstr}
Let $Q$ be a graded polynomial ring, and let $I$ be the ideal minimally generated by monomials $m_1,...,m_r$.  Denote by $d_i$ the degree of the monomial $m_i$ and by $I_i$ the ideal generated by $m_1,...,m_i$.  For each $i\geq 1$, we have the following graded short exact sequence
\begin{align*}
0\longrightarrow Q/(I_i:m_{i+1})(-d_{i+1})\overset{m_{i+1}}{\longrightarrow}Q/I_i\longrightarrow Q/I_{i+1}\longrightarrow 0.
\end{align*}
Note that we have shifted the first term by the degree of the monomial $m_{i+1}$ to make the multiplication by $m_{i+1}$ a degree zero map.  Thus, given graded $Q$-free resolutions $G^i$ of $Q/I_i$ and $F^{i}$ of $Q/(I_i:m_{i+1})$, there is a map of complexes $\phi_i:F^i\rightarrow G^i$ induced by multiplication by $m_{i+1}$, which we will call the \textit{comparison map}.  The mapping cone of the comparison map is a graded free resolution $F^{i+1}=\text{cone}\,(\phi_i)$ of $Q/I_{i+1}$.  Applying this construction for each $i=1,...,r-1$ to obtain a graded free resolution of $Q/I=Q/I_r$ is called the \textit{iterated mapping cone construction}. 
\end{constr}

We make the following important remarks about the iterated mapping cone construction.

\begin{remk} \label{remkconeconstr}Using the notation from Construction \ref{coneconstr}, we note the following. \hfill
\begin{itemize}
\item[(i)]  The resolution of $I=(m_1,...m_r)$ produced by the mapping cone construction depends on the given order of the monomials.  We illustrate this remark in Example \ref{exorder} below.
\item[(ii)] For any $i\geq 1$, $\text{cone}\,(\phi_i)$ need not be minimal, even if the given free resolutions $F^i$ and $G^i$ are minimal.  Thus, the resolution of $I$ produced by the iterated mapping cone construction need not be minimal.  We illustrate this remark in Example \ref{cycleex} below.    
\end{itemize}
\end{remk}

We now recall the following theorem that follows from results of H{\`a} and Van Tuyl in \cite{havan} and was proved independently by Bouchat in \cite[Thm 3.0.16]{MR2992613}. It will be useful in the proofs of our results.  

\begin{thm}\label{bouchat}
Let $Q=k[x_1,...,x_n]$ and let $G$ be a simple graph on vertices $x_1,...,x_n$ such that $x_n$ is a vertex of degree $1$ and is connected by an edge to the vertex $x_{n-1}$.  Then the mapping cone construction applied to the map 
\begin{align*}
Q/(I_{G\setminus{x_n}}:x_{n-1}x_n)(-2)\overset{x_{n-1}x_n}{\longrightarrow}Q/I_{G\setminus x_n}
\end{align*}
is a minimal graded free resolution of $Q/I_G$.   
\end{thm}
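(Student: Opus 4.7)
The plan is to verify minimality of the iterated mapping cone by showing that the comparison map between the minimal graded free resolutions of $Q/(I_{G\setminus x_n}:x_{n-1}x_n)(-2)$ and $Q/I_{G\setminus x_n}$ can be chosen with all entries in the maximal ideal $\mathfrak{m}=(x_1,\ldots,x_n)$. Combined with the minimality of the two source resolutions, this forces every entry of the cone's differential into $\mathfrak{m}$, which by definition makes the cone the minimal graded free resolution of $Q/I_G$.

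First I would analyze the colon ideal. Since $x_n$ does not appear in any generator of $I_{G\setminus x_n}$, it is a nonzerodivisor on $Q/I_{G\setminus x_n}$, and therefore $(I_{G\setminus x_n}:x_{n-1}x_n)=(I_{G\setminus x_n}:x_{n-1})$. A direct inspection of monomial generators shows this ideal is generated by the variables $x_i$ indexed by the neighbors of $x_{n-1}$ in $G\setminus x_n$, together with the edges of $G$ incident to neither $x_{n-1}$ nor $x_n$; in particular, $x_n$ does not appear in any generator. Setting $Q':=k[x_1,\ldots,x_{n-1}]$, both $I_{G\setminus x_n}$ and $(I_{G\setminus x_n}:x_{n-1}x_n)$ are extensions of ideals from $Q'$ to $Q=Q'[x_n]$, so base-changing their minimal graded free resolutions over $Q'$ to $Q$ produces the minimal resolutions $F$ and $G$ over $Q$, with differentials involving only $x_1,\ldots,x_{n-1}$.

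The main step is to factor the comparison map. Multiplication by $x_{n-1}x_n$ factors as multiplication by $x_{n-1}$ (which comes from a $Q'$-linear map) followed by multiplication by $x_n$. Lifting the first factor to a chain map $\psi\colon F\to G$ (obtained as the base change of a lift over $Q'$), the product $\phi:=x_n\cdot\psi$ is a chain-level lift of the full comparison map, and every entry of $\phi$ is divisible by $x_n$ and hence lies in $\mathfrak{m}$. The differential of $\mathrm{cone}(\phi)$ is then assembled from the differentials of $F$ and $G$ (in $\mathfrak{m}$ by minimality) and from $\phi$ (in $\mathfrak{m}$ by the above), so $\mathrm{cone}(\phi)$ is a minimal graded free resolution of $Q/I_G$, as desired.

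The only substantive point is the first step, verifying that coloning $I_{G\setminus x_n}$ by $x_{n-1}x_n$ eliminates the variable $x_n$ from the generators, which is precisely what enables the base-change argument. Once this is in hand, the factorization $x_{n-1}x_n = x_n\cdot x_{n-1}$ together with the block form of the cone's differential handle the rest of the argument.
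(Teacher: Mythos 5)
The paper does not supply a proof of this statement---it is quoted as a known result of H\`a--Van Tuyl and of Bouchat---so there is no argument in the paper to compare yours against. On its own merits, your proof is correct. The central idea, exhibiting a comparison map all of whose entries lie in $\mathfrak{m}=(x_1,\dots,x_n)$ so that the cone's block differential is automatically minimal, is exactly the right one, and the two supporting facts check out: $x_n$ does not divide any generator of $I_{G\setminus x_n}$, so it is a nonzerodivisor on $Q/I_{G\setminus x_n}$ and hence $(I_{G\setminus x_n}:x_{n-1}x_n)=(I_{G\setminus x_n}:x_{n-1})$; and factoring multiplication by $x_{n-1}x_n$ through multiplication by $x_n$ produces a lift $\phi=x_n\psi$, every entry of which is divisible by $x_n$. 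One small economy you could make: the base-change discussion over $Q'=k[x_1,\dots,x_{n-1}]$ is not actually needed for the minimality conclusion, because \emph{any} chain-level lift $\psi$ of multiplication by $x_{n-1}$ will serve---the entries of $x_n\psi$ land in $\mathfrak{m}$ regardless of whether $\psi$ happens to be defined over $Q'$. What the nonzerodivisor observation genuinely buys you is the identification $Q/(I_{G\setminus x_n}:x_{n-1}x_n)=Q/(I_{G\setminus x_n}:x_{n-1})$, so that the factorization of the module map through the intermediate cyclic module makes sense before lifting to resolutions. Finally, since any two comparison maps lifting the same module map are chain homotopic and therefore yield isomorphic cones, establishing minimality for your particular $\phi$ settles the theorem for every choice of lift.
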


The following example shows that the conclusion of Theorem \ref{bouchat} need not hold if the graph $G$ has no vertex of degree 1.

\begin{ex}\label{cycleex}
Let $G$ be the 5-cycle shown in Figure \ref{fig2} and consider its associated edge ideal $I_G=(x_1x_2,\,x_2x_3,\,x_3x_4,\,x_4x_5,\,x_1x_5)$.  Applying the iterated mapping cone construction to resolve $Q/I_G$, we get the following comparison map in the last iteration. \\
\begin{tikzcd}[ampersand replacement=\&]
0 \arrow{r} \& 0 \arrow{r} \& Q(-4) \arrow{ddd}{\begin{bmatrix}
  0\\
  0\\
  0\\
  -1
\end{bmatrix}} \arrow{rrrr}{
\begin{bmatrix}
  -x_4\\
  x_2
\end{bmatrix}
} \&\&\&\& Q(-3)^2 \arrow{rrrr}{
\begin{bmatrix}
  x_2 & x_4
\end{bmatrix}} \arrow{ddd}{\begin{bmatrix}
  x_5 & 0\\
  0& 0\\
  0& 0\\
  0 & x_1
\end{bmatrix}} \&\&\&\& Q(-2) \arrow{ddd}{x_1x_5} \\\\\\
0 \arrow{r} \& Q(-5) \arrow{r}{}[swap]{\begin{bmatrix}
  x_4x_5\\
  x_1x_5\\
  x_1x_2\\
  -x_3
\end{bmatrix}} \& Q(-3)^3\oplus Q(-4) \arrow{rrrr}{}[swap]{\begin{bmatrix}
  x_3 & 0 & 0 & x_4x_5\\
  -x_1 & x_4 & 0 & 0\\
  0 & -x_2 & x_5 & 0\\
  0 & 0 & -x_3 & -x_1x_2
\end{bmatrix}} \&\&\&\& Q(-2)^4 \arrow{rrrr}{}[swap]{\begin{bmatrix}
  x_1x_2 & x_2x_3 & x_3x_4 & x_4x_5
\end{bmatrix}} \&\&\&\& Q \\
\end{tikzcd} 
\newline \hfill
\newline \hfill
\newline \hfill
We see that the cone of this comparison map will produce a non-minimal resolution.
\end{ex}

\begin{figure}[H]
\vspace{-.5cm}
\centering
\includegraphics[width=6cm]{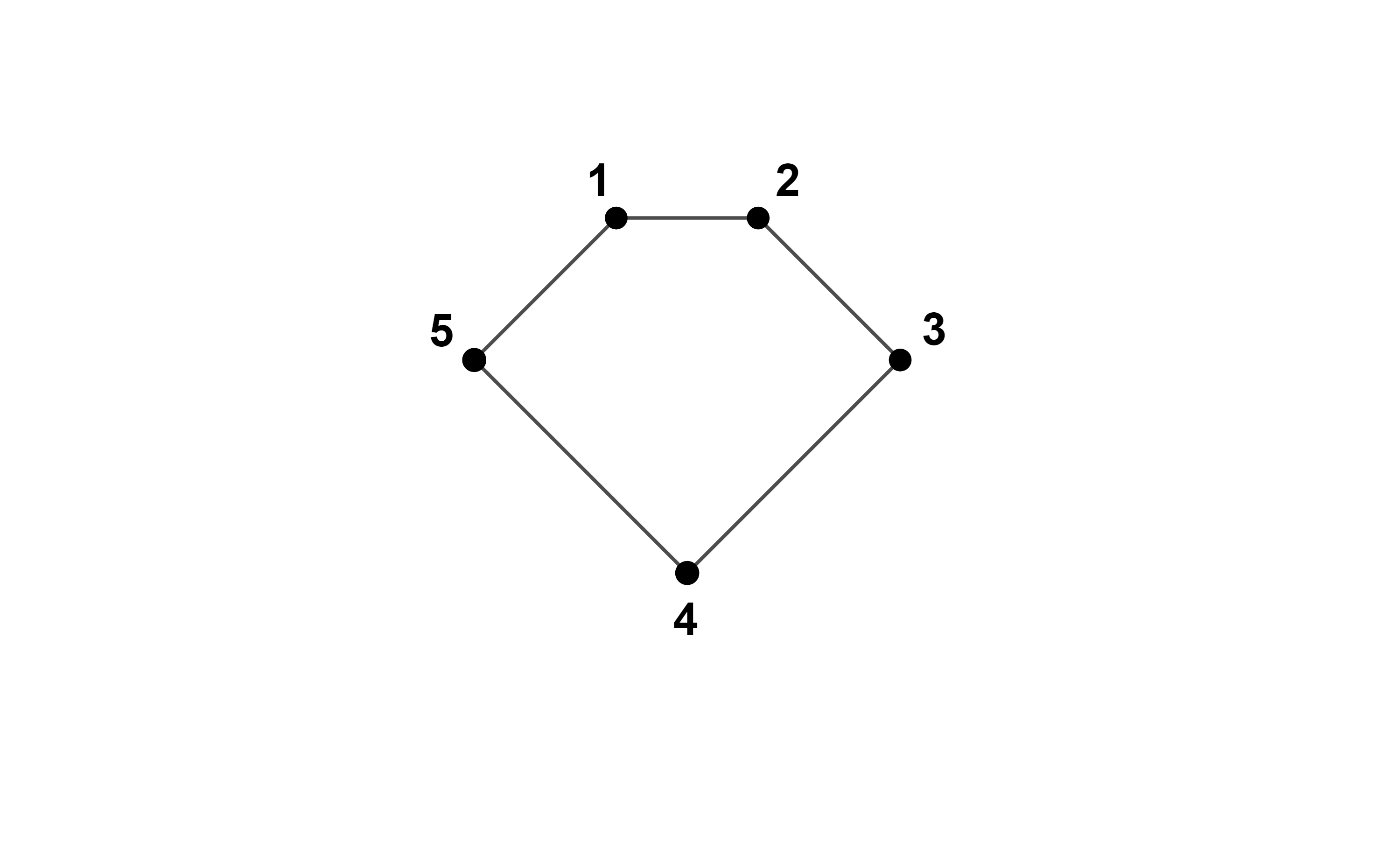}
\vspace{-.75cm}
\caption{The graph $G$ is a 5-cycle.}
\label{fig2}
\end{figure}

By Remark \ref{treeremk}, Theorem \ref{bouchat} provides an inductive method for finding the minimal graded free resolution of $Q/I_G$, where $G$ is any tree.  We state this as a corollary.

\begin{cor}\label{corbouchat}
If $G$ is a tree, then, in some order, the iterated mapping cone construction gives the minimal graded free resolution of $Q/I_G$ over $Q$.  
\end{cor}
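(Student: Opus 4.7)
The plan is to proceed by induction on the number of vertices $n$ of the tree $G$. The base case $n=1$ is trivial: then $G$ has no edges, $I_G=(0)$, and $Q/I_G=Q$ is its own minimal graded free resolution, vacuously obtained from an empty iteration of mapping cones. For the inductive step, I would first invoke Remark \ref{treeremk}(i) to produce a leaf of $G$, which I relabel $x_n$; it is joined by a unique edge to some vertex which I relabel $x_{n-1}$. I would then consider $G':=G\setminus x_n$ and observe that $G'$ is a tree on $n-1$ vertices (removing a leaf preserves both connectedness and acyclicity), so the inductive hypothesis yields an ordering $m_1,\dots,m_{r-1}$ of the edges of $G'$ for which the iterated mapping cone construction produces the minimal graded free resolution of $Q/I_{G'}$.

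I would then extend this to an ordering of the edges of $G$ by appending $m_r := x_{n-1}x_n$ as the final edge. Under this ordering, the first $r-1$ steps of Construction \ref{coneconstr} coincide exactly with the construction for $G'$, yielding the minimal graded free resolution of $Q/I_{G'}=Q/I_{r-1}$. The $r$-th (and final) step of the iteration is then precisely the setup of Theorem \ref{bouchat} applied to $G$ with leaf $x_n$: we form the mapping cone of the comparison map
\begin{equation*}
Q/(I_{G'}:x_{n-1}x_n)(-2)\overset{x_{n-1}x_n}{\longrightarrow}Q/I_{G'},
\end{equation*}
and Theorem \ref{bouchat} directly supplies minimality of the resulting resolution of $Q/I_G=Q/I_r$.

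The only point requiring attention is that Construction \ref{coneconstr} takes as auxiliary input, at each step, a resolution of the colon ideal $Q/(I_i:m_{i+1})$; the corollary should therefore be read as asserting the existence of an edge ordering (together with appropriate choices of these auxiliary resolutions) for which the iterated construction outputs a minimal graded free resolution of $Q/I_G$. In our argument only the final cone's minimality matters, and this minimality is exactly the content of Theorem \ref{bouchat} once the resolution of $Q/I_{G'}$ entering that final step is minimal, which is supplied by the inductive hypothesis. Thus no genuine obstacle arises: the corollary is an essentially immediate consequence of Theorem \ref{bouchat} and Remark \ref{treeremk}(i), and the substantive work has already been carried out in Theorem \ref{bouchat}.
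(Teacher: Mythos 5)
Your proof is correct and takes exactly the same route as the paper, which dispatches this corollary in a single sentence by noting that Remark \ref{treeremk} together with Theorem \ref{bouchat} gives an inductive method: peel off a leaf, resolve the smaller tree minimally by induction, and apply Theorem \ref{bouchat} at the final step. Your write-up is simply a careful expansion of that sentence, including the sensible observation that the auxiliary colon-ideal resolutions should be taken minimal.
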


The following example illustrates the importance of the order in which the iterated mapping cone construction is applied.

\begin{ex}\label{exorder}
Let $G$ be the tree shown in Figure \ref{fig3} and consider its associated edge ideal $I_G=(x_1x_3,\,x_2x_3,\,x_3x_4,\,x_4x_5)$.  Applying the iterated mapping cone construction to resolve $Q/I_G$, we get the following comparison map in the last iteration. 
\begin{center}
\begin{tikzcd}[ampersand replacement=\&]
0 \arrow{r} \& 0 \arrow{r} \& 0 \arrow{rrr} \&\&\& Q(-3) \arrow{rrr}{
x_3} \arrow{dd}{\begin{bmatrix}
  0 \\
  0 \\
  x_5  
\end{bmatrix}} \&\&\& Q(-2) \arrow{dd}{x_4x_5} \\\\
0 \arrow{r} \& Q(-4) \arrow{r}{}[swap]{\begin{bmatrix}
 -x_4\\
 x_2\\
-x_1
\end{bmatrix}} \& Q(-3)^3 \arrow{rrr}{}[swap]{\begin{bmatrix}
  x_2 & x_4 & 0 \\
  -x_1 & 0 & x_4 \\
  0 & -x_1 & -x_2
\end{bmatrix}} \&\&\& Q(-2)^3 \arrow{rrr}{}[swap]{\begin{bmatrix}
  x_1x_3 & x_2x_3 & x_3x_4
\end{bmatrix}} \&\&\& Q \\
\end{tikzcd}
\end{center}
However, if we instead order the minimal generators of the edge ideal as $I_G=(x_1x_3,\,x_2x_3,\,x_4x_5,\,x_3x_4)$ and apply the iterated mapping cone construction, we get the following comparison map in the last iteration. 
\begin{center}
\begin{tikzcd}[ampersand replacement=\&]
0 \arrow{r} \& Q(-5) \arrow{r}{\begin{bmatrix}
 x_5\\
 -x_2\\
x_1
\end{bmatrix}}\arrow{dd}{-1} \& Q(-4)^3 \arrow{rrrr}{\begin{bmatrix}
  -x_2 & -x_5 & 0 \\
  x_1 & 0 & -x_5 \\
  0 & x_1 & x_2
\end{bmatrix}}\arrow{dd}{\begin{bmatrix}
  -x_4 & 0 & 0 \\
  0 & -1 & 0 \\
  0 & 0 & -1
\end{bmatrix}} \&\&\&\& Q(-3)^3 \arrow{rrr}{\begin{bmatrix}
  x_1 & x_2 & x_5
\end{bmatrix}} \arrow{dd}{\begin{bmatrix}
  x_4 & 0 & 0 \\
  0 & x_4 & 0 \\
  0 & 0 & x_3
\end{bmatrix}} \&\&\& Q(-2) \arrow{dd}{x_3x_4} \\\\
0 \arrow{r} \& Q(-5) \arrow{r}{}[swap]{\begin{bmatrix}
 x_4x_5\\
 -x_2\\
x_1
\end{bmatrix}} \& Q(-3)\oplus Q(-4)^2 \arrow{rrrr}{}[swap]{\begin{bmatrix}
  x_2 & x_4x_5 & 0 \\
  -x_1 & 0 & x_4x_5 \\
  0 & -x_1x_3 & -x_2x_3
\end{bmatrix}} \&\&\&\& Q(-2)^3 \arrow{rrr}{}[swap]{\begin{bmatrix}
  x_1x_3 & x_2x_3 & x_4x_5
\end{bmatrix}} \&\&\& Q \\
\end{tikzcd}
\end{center}
It is clear that applying the mapping cone construction in these two orders produce different resolutions.  We note that the second resolution is not minimal.  If it was, it would have to be isomorphic to the first one.
\end{ex}

\begin{figure}[H]
\vspace{-.5cm}
\centering
\includegraphics[width=6cm]{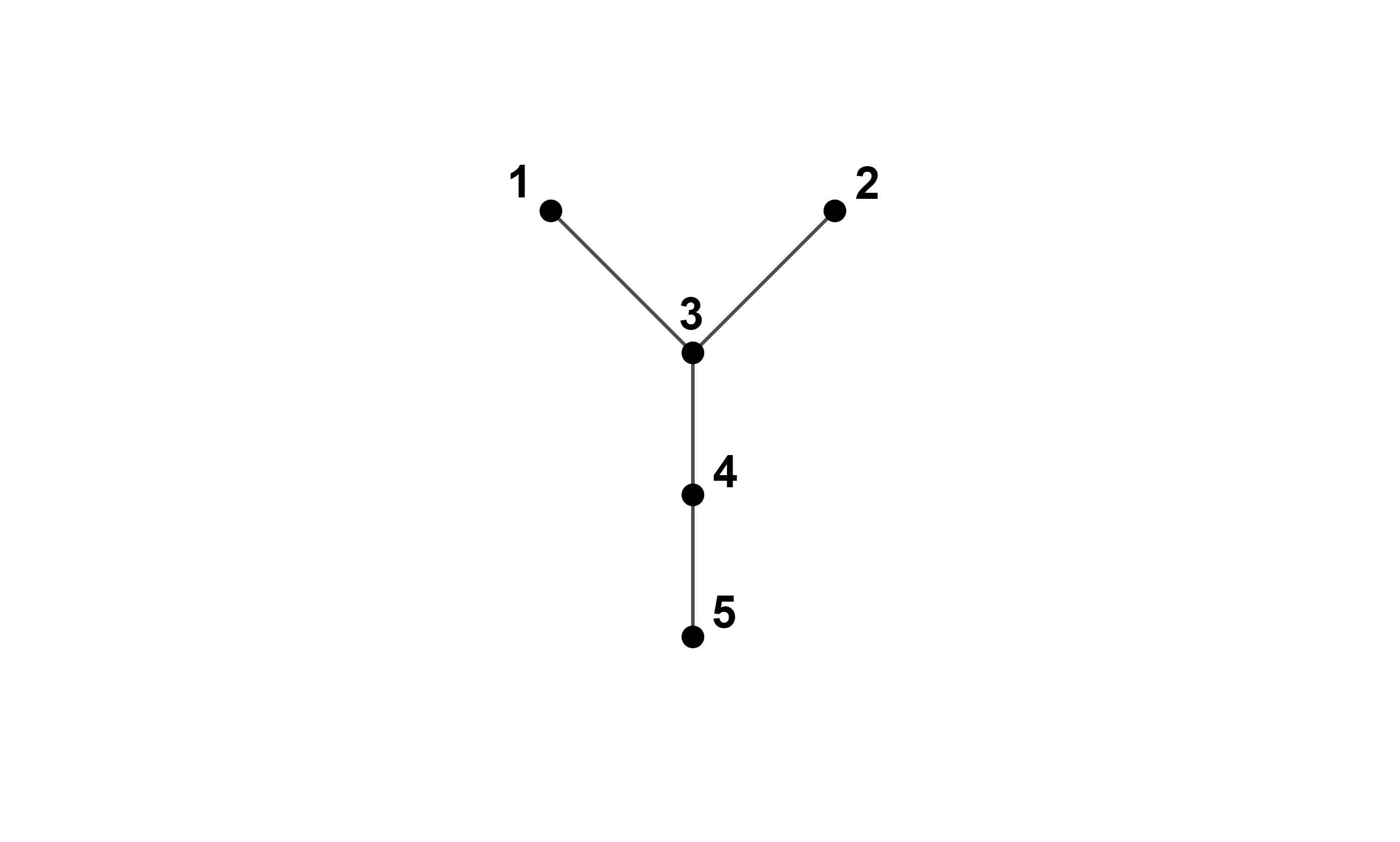}
\vspace{-.75cm}
\caption{The graph $G$ is a tree}
\label{fig3}
\end{figure}

We use the iterated mapping cone construction in Section 3 to explicitly build the minimal graded free resolution of $Q/I_G$, where $G$ is a tree.  This resolution is an important ingredient in our proof of the main result in Section 4.

\subsection{Multiplicative Structures on Resolutions} 
\paragraph
\indent
Let $Q=k[x_1,...x_n]$ be a standard graded polynomial ring over any field $k$ and let $I$ be a monomial ideal of $Q$.  Let $F$ be the minimal graded free resolution of $Q/I$ over $Q$.  In this section we recall the notion of a multiplicative structure on $F$; see for example \cite{katthan}.

\begin{defn}
A $Q$-linear map $F\otimes_Q F\rightarrow F$ sending $a\otimes b$ to $a\cdot b$ is a \textit{multiplication} if it satisfies the following conditions for $a,b\in F$:
\begin{itemize}
\item[(i)] it extends the usual multiplication on $F_0=Q$
\item[(ii)]it satisfies the Leibniz rule: $\partial(ab)=\partial(a)b+(-1)^{|a|}a\partial(b)$
\item[(iii)]  it is homogeneous with respect to the homological grading: $|a\cdot b|=|a|+|b|$
\item[(iv)] it is graded commutative: $a\cdot b=(-1)^{|a||b|}b\cdot a$
\end{itemize}  
\end{defn}

Notice we do not require that a multiplication is associative. The following fact is due to Buchsbaum and Eisenbud.

\begin{prop}\cite[Prop 1.1]{BE77}\label{propbe}
The resolution $F$ admits a multiplication.
\end{prop}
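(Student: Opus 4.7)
The plan is to construct the multiplication inductively on the total homological degree $n=p+q$ of the maps $\mu_{p,q}\colon F_p\otimes_Q F_q\to F_{p+q}$, exploiting the fact that, since $F$ resolves $Q/I$, the complex $F$ is exact in positive degrees.

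For the base case, set $\mu_{0,0}\colon Q\otimes Q\to Q$ to be the usual multiplication on $Q$, and for $p\geq 1$ define $\mu_{0,p}$ and $\mu_{p,0}$ via the $Q$-module structure on $F_p$; these trivially satisfy (i)--(iv). Inductively, assume that $\mu_{p,q}$ have been constructed for all $p+q<n$ in a way that (i)--(iv) hold. Fix $p,q\geq 1$ with $p+q=n$ and choose $Q$-bases $\{e_\alpha\}$ and $\{f_\beta\}$ of $F_p$ and $F_q$. For each basis pair, consider
\begin{align*}
z_{\alpha,\beta}=\partial(e_\alpha)\cdot f_\beta+(-1)^{p}\,e_\alpha\cdot\partial(f_\beta)\ \in\ F_{n-1},
\end{align*}
where both products are of total degree $n-1$ and are therefore already defined. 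A direct computation using $\partial^2=0$ and the Leibniz rule in degrees below $n$ yields $\partial(z_{\alpha,\beta})=0$. Since $n\geq 2$ and $F$ is acyclic in positive degrees, the cycle $z_{\alpha,\beta}$ is a boundary, so one may choose $e_\alpha\cdot f_\beta\in F_n$ with $\partial(e_\alpha\cdot f_\beta)=z_{\alpha,\beta}$. Extending $Q$-bilinearly over the bases produces $\mu_{p,q}$ satisfying (i)--(iii) by construction.

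The main obstacle will be enforcing graded commutativity (iv) simultaneously with the Leibniz rule. For pairs with $p\neq q$, one may simply declare $\mu_{q,p}(f\otimes e):=(-1)^{pq}\mu_{p,q}(e\otimes f)$ and recover the Leibniz rule automatically. The diagonal case $p=q$ is subtler: it demands coordinated choices of lifts, and when $p$ is odd one must arrange $2(e\cdot e)=0$ on basis elements $e$. I would handle this either by restricting the lifting choice to one ordered half of the pairs of basis elements and then symmetrizing (when the characteristic permits), or by following the finer inductive refinement of Buchsbaum--Eisenbud in \cite{BE77}, which chooses the lifts so that antisymmetry is propagated through each degree; the freedom to modify any chosen lift by a boundary from $F_{n+1}$ provides the room to make such a correction.
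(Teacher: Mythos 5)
The paper offers no proof of this statement: it is cited directly from Buchsbaum--Eisenbud. Your inductive lifting argument is, however, exactly the standard construction behind \cite[Prop 1.1]{BE77}, and the key steps are carried out correctly: the verification that $z_{\alpha,\beta}$ is a cycle (using $\partial^2=0$ and the Leibniz rule in lower total degree), the use of acyclicity of $F$ in positive degrees to choose a preimage, and the trick of defining $\mu_{q,p}$ for $q>p$ by decree and recovering its Leibniz rule from the inductively-assumed graded commutativity of the lower-degree products.

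The one place you are more cautious than necessary is the diagonal case $p=q$ with $p$ odd. No characteristic hypothesis and no symmetrization (hence no division by $2$) is needed. By the inductive graded-commutativity hypothesis applied to the degree-$(p-1,p)$ product,
\begin{align*}
\partial(e_\alpha)\cdot e_\alpha=(-1)^{(p-1)p}\,e_\alpha\cdot\partial(e_\alpha)=e_\alpha\cdot\partial(e_\alpha),
\end{align*}
so for $p$ odd
\begin{align*}
z_{\alpha,\alpha}=\partial(e_\alpha)\cdot e_\alpha-e_\alpha\cdot\partial(e_\alpha)=0,
\end{align*}
and one may simply declare $e_\alpha\cdot e_\alpha=0$. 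This choice satisfies the Leibniz rule ($\partial(0)=0=z_{\alpha,\alpha}$) and (iv) trivially. Combined with your ordered-half prescription $e_\alpha\cdot e_\beta:=(-1)^{p}e_\beta\cdot e_\alpha$ for $\alpha>\beta$ (whose Leibniz rule again follows from graded commutativity in lower degrees), the construction goes through uniformly over any commutative base ring, which is exactly the generality asserted in \cite{BE77}.
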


This fact will be useful in the proofs of our results.

\subsection{Explicit Bases for Koszul Homology} 
\paragraph
\indent Let $Q=k[x_1,...,x_n]$ be a standard graded polynomial ring over any field $k$, let $I$ be a homogeneous ideal of $Q$, and let $R=Q/I$.  In this section, we discuss explicit bases of the Koszul homology modules $H_i(R)$ given by Herzog and Maleki in \cite{Herzog2018}.  In order to describe these bases explicitly, we first set up some notation.
\newline
\indent Herzog and Maleki define operators on $Q$ as follows.  For $f\in(x_1,...,x_n)$ and for $r=1,...,n$, let
\begin{align*}
d^r(f)=\frac{f(0,...,0,x_r,...,x_n)-f(0,...0,x_{r+1},...x_n)}{x_r}.
\end{align*}
It is clear that the operators $d^r:Q\rightarrow Q$ are $k$-linear maps and that they depend on the order of the variables.  In this paper, we apply these operators to monomials.  The following basic lemma describes how $d^r$ behaves in this context.  

\begin{lemma}\label{lemmaoperators}
Let $f$ be the monomial $x_{k_1}...x_{k_i}$ with $k_1\leq...\leq k_i$.  Then
\begin{align*}
d^r(f)=\begin{cases}
x_{k_2}...x_{k_i} & r=k_1 \\
0 & \text{otherwise}
\end{cases}.
\end{align*}
\end{lemma}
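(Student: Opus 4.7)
The plan is to unwind the definition of $d^r$ on the monomial $f = x_{k_1}\cdots x_{k_i}$ (with $k_1 \leq \cdots \leq k_i$) and split into three cases according to how $r$ compares with $k_1$. The key observation is that substituting $(0,\ldots,0,x_s,\ldots,x_n)$ into a single monomial yields the monomial itself when every index $k_j$ satisfies $k_j \geq s$, and yields $0$ otherwise; since the $k_j$ are weakly increasing, this condition reduces to $k_1 \geq s$.

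With that reduction in hand, I would argue as follows. If $r < k_1$, then $k_1 \geq r+1$, so both $f(0,\ldots,0,x_r,\ldots,x_n)$ and $f(0,\ldots,0,x_{r+1},\ldots,x_n)$ equal $f$ unchanged, their difference is zero, and hence $d^r(f) = 0$. If $r > k_1$, then $k_1 < r$ and also $k_1 < r+1$, so the factor $x_{k_1}$ is sent to $0$ under both substitutions, killing the entire monomial in each case, and again $d^r(f) = 0$. Finally, if $r = k_1$, the first substitution leaves $f$ unchanged (since every $k_j \geq k_1 = r$) while the second sends $x_{k_1} = x_r$ to $0$ and thereby annihilates the whole monomial. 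Dividing the resulting numerator $f$ by $x_r = x_{k_1}$ yields $d^r(f) = x_{k_2}\cdots x_{k_i}$, which matches the claimed formula.

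The statement is essentially a direct bookkeeping computation from the definition, so I do not anticipate any genuine obstacle. The only care required is in correctly tracking which of the two substitutions preserves $f$ and which kills it, and in confirming that the division by $x_r$ is legitimate — it is, because the numerator is nonzero precisely in the case $r = k_1$, where it is divisible by $x_r$.
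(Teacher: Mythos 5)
Your proof takes the same three-case split ($r < k_1$, $r = k_1$, $r > k_1$) as the paper's own proof and evaluates the two substitutions in exactly the same way; the only difference is that you spell out a bit more explicitly why each substitution preserves or annihilates the monomial. The argument is correct and is essentially the paper's proof with added exposition.
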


\begin{proof}
If $r=k_1$, then by definition $d^r(f)=\frac{x_{k_1}...x_{k_i}-0}{x_{k_1}}=x_{k_2}...x_{k_i}$.  If $r< k_1$, then $d^r(f)=\frac{f-f}{x_r}=0$.  If $r> k_1$, then $d^r(f)=\frac{0-0}{x_r}=0$
\end{proof}

This simple fact will be useful in the proof of our main result.  The following theorem due to Herzog and Maleki describes explicit bases for the Koszul homology modules.  To set notation for the theorem, let $F$ be a minimal graded free resolution of $Q/I$ over $Q$ and let $b_i$ be the rank of $F_i$ for each $i$.  For each $i$, fix a homogeneous basis $e_1^i,...,e_{b_i}^i$ of $F_i$ and let $\partial(e_j^i)=\sum_{k=1}^{b_{i-1}} f_{k,j}^i e_k^{i-1}$.  Let $dx_1,...,dx_n$ be the standard generators of $K(x_1,...,x_n;Q)$.  

\begin{thm}\cite[Thm 1.3]{Herzog2018}\label{thmhm}
For each $i=1,...,n$, a $k$-basis of $H_i(R)$ is given by $[\bar{z}_{i,j}]$ for $j=1,...,b_i$, where 
\begin{align*}
z_{i,j}=\sum_{1\leq k_1<...<k_i\leq n}\sum_{j_1=1}^{b_1}...\sum_{j_{i-1}=1}^{b_{i-1}}d^{k_i}(f_{j_{i-1},j}^i)...d^{k_2}(f_{j_1,j_2}^2)d^{k_1}(f_{1,j_1}^1)dx_{k_1}...dx_{k_i}.
\end{align*}
\end{thm}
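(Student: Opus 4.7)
The plan is to realize the isomorphism $\Phi\colon F\otimes k\xrightarrow{\sim} H(R)$ of \eqref{eq:6} at the chain level and to identify $\Phi([e_j^i\otimes\bar 1])$ with $[\bar z_{i,j}]$; since $\{e_j^i\otimes\bar 1\}_{j=1}^{b_i}$ is a $k$-basis of $F_i\otimes k$ by minimality of $F$, transport across $\Phi$ then gives $\{[\bar z_{i,j}]\}_{j=1}^{b_i}$ as a $k$-basis of $H_i(R)$. To realize $\Phi$ I would use the classical balancing of $\Tor^Q_*(R,k)$ on the double complex $F\otimes_Q K$, where $K=K(x_1,\ldots,x_n;Q)$: its total complex is quasi-isomorphic to both $F\otimes_Q k$ (via $K\to k$) and to $K\otimes_Q R=K(R)$ (via $F\to R$), and $\Phi$ is the induced composition in homology. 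Consequently, $\Phi([e_j^i\otimes\bar 1])$ is computed by lifting $\alpha_0=e_j^i\otimes 1\in F_i\otimes K_0$ to a total-cycle $\alpha_0+\alpha_1+\cdots+\alpha_i$ with $\alpha_l\in F_{i-l}\otimes K_l$, and taking the image of the top piece $\alpha_i\in F_0\otimes K_i=K_i$ in $K(R)$. The theorem reduces to constructing such a lift whose top piece reduces to $\bar z_{i,j}$.

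The lift is built inductively using the telescoping identity $f=\sum_{r=1}^n x_r\, d^r(f)$ for $f\in(x_1,\ldots,x_n)$, which is immediate from the definition of $d^r$ and underlies Lemma \ref{lemmaoperators}. At stage $l=0$, each coefficient $f^i_{l_1,j}$ of $D(\alpha_0)=\sum_{l_1}f^i_{l_1,j}\,e_{l_1}^{i-1}\otimes 1$ lies in $(x_1,\ldots,x_n)$ by minimality of $F$, so telescoping rewrites $D(\alpha_0)$ as $\pm(1\otimes\partial^K)$ applied to $\sum_{l_1,r}d^r(f^i_{l_1,j})\,e_{l_1}^{i-1}\otimes dx_r$, which is $\alpha_1$. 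At each subsequent stage, the obstruction $(\partial^F\otimes 1)(\alpha_l)$ is a cycle in the Koszul direction thanks to $\partial^F\circ\partial^F=0$, and telescoping the newly introduced coefficients produces $\alpha_{l+1}$ by appending a new factor $d^{k_{l+1}}(f^{i-l}_{\cdot,\cdot})$ to the running product of operators and a new generator $dx_{k_{l+1}}$ to the running wedge. After $i$ iterations and reduction modulo $I$, the top piece $\alpha_i$ reproduces $z_{i,j}$, with the strict ordering $k_1<\cdots<k_i$ arising from the triangular structure of the operators $d^r$ (whose outputs lie in $k[x_r,\ldots,x_n]$).

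The main obstacle is verifying at each stage that the proposed $\alpha_{l+1}$ really solves $(1\otimes\partial^K)(\alpha_{l+1})=\pm(\partial^F\otimes 1)(\alpha_l)$. Already for $i=2$ this reduces to the identity $\Omega_t=0$ for each $t$, where $\Omega_t=x_t H_{t,t}+\sum_{s>t}x_s(H_{t,s}+H_{s,t})$ and $H_{r,s}=\sum_{l_1}d^r(f^2_{l_1,j})d^s(f^1_{1,l_1})$: the weaker $\sum_t x_t\Omega_t=0$ is immediate from $\partial^F\circ\partial^F=0$, and the desired pointwise vanishing then follows because each $\Omega_t$ lies in $k[x_t,x_{t+1},\ldots,x_n]$—once again by the triangularity of the $d^r$—so splitting by $x_1$-degree and iterating isolates each $\Omega_t$. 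Analogous identities handle the higher stages. The remaining difficulty is bookkeeping: tracking signs from the Leibniz rule on $K$ and from the total differential on $F\otimes K$, and reindexing the sum produced over tuples $(k_1,\ldots,k_i)$ as the ordered sum over $k_1<\cdots<k_i$ in the statement.
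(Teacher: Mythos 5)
This statement is quoted from Herzog and Maleki \cite[Thm 1.3]{Herzog2018}; the paper cites it without reproducing a proof, so there is no in-paper argument to compare against. Your blind reconstruction is, however, essentially the Herzog--Maleki argument: realize the balancing isomorphism $\Tor^Q_\ast(R,k)$ on the double complex $F\otimes_Q K$, lift $e_j^i\otimes 1$ by a staircase, and use the telescoping identity $f=\sum_r x_r d^r(f)$ (valid for $f\in(x_1,\dots,x_n)$) together with minimality of $F$ to produce each rung. You also correctly isolate the one nontrivial point, namely that the \emph{ordered} sum over $k_1<\cdots<k_i$ (rather than a full antisymmetrization) is already a valid lift; your $\Omega_t=0$ verification for $i=2$, with $\Omega_t$ supported in $k[x_t,\dots,x_n]$ and $\sum_t x_t\Omega_t=0$ forced by $\partial^F\circ\partial^F=0$, so that successive substitution $x_1=0,\,x_2=0,\dots$ isolates each $\Omega_t$, is exactly the right mechanism. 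The parts left as ``analogous identities'' and ``bookkeeping'' are genuinely the bulk of the work in the general case (one needs the vanishing relative to each fixed intermediate index $j_p$, plus sign tracking through the wedge), but the structure of the argument is correct and faithful to the original.
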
 

In the proof of Theorem \ref{thmhm}, Herzog and Maleki show that the isomorphism (\ref{eq:6}) is given explicitly by
\begin{align*}
&\Phi:F\underset{Q}{\otimes}k\rightarrow H(R) \\
&\Phi(e_{j}^i\otimes \bar{1})=[\bar{z}_{i,j}].
\end{align*}

We conclude this section with the following remark.

\begin{remk}
We note that in \cite{MR1310371}, Herzog gives a different description of bases for $H_i(R)$ under the assumption that $k$ is a field of characteristic zero.  In \cite{2019arXiv190408350D}, the author gives explicit bases in a more general case, namely the case of the Koszul complex on any full regular sequence, which recover Herzog's bases in characteristic zero.  In this paper, we use the description of the bases given by Herzog and Maleki as they hold in any characteristic and also many terms vanish in the case of a monomial ideal.  This vanishing plays an important role in the proof of our main result.     
\end{remk}

\section{Construction of the Resolution}
\paragraph
\indent  Let $G$ be a tree and let $Q$ be a standard graded polynomial ring over any field $k$ with variables given by the vertices in $G$.  In this section we construct the minimal graded free resolution of $Q/I_G$ over $Q$.  We begin by setting the notation to be used throughout the section.  
\newline
\indent We name the vertices in $G$ as follows.  Since $G$ is a tree, it has at least one vertex of degree 1; call it $x_1$ and call the vertex it is connected to by an edge $x_2$.  Call the other vertices to which $x_2$ is connected to by an edge, $x_{2,1},...,x_{2,r}$.  For each $\ell=1,...,r$, call the other vertices to which $x_{2,\ell}$ is connected to by an edge, $x_{2,\ell,1},...,x_{2,\ell,m_{\ell}}$.  Note that $G\smallsetminus\{x_1,x_2,x_{2,1},...,x_{2,r}\}$ is a subforest.  In particular, it is the disjoint union of $M:=\sum_{\ell=1}^rm_{\ell}$ trees, call them $T_1,...,T_{M}$.  With this notation in mind, we view $G$ as the following diagram. \\      

\begin{figure}[H]
\vspace{-.5cm}
\centering
\includegraphics[width=8.5cm]{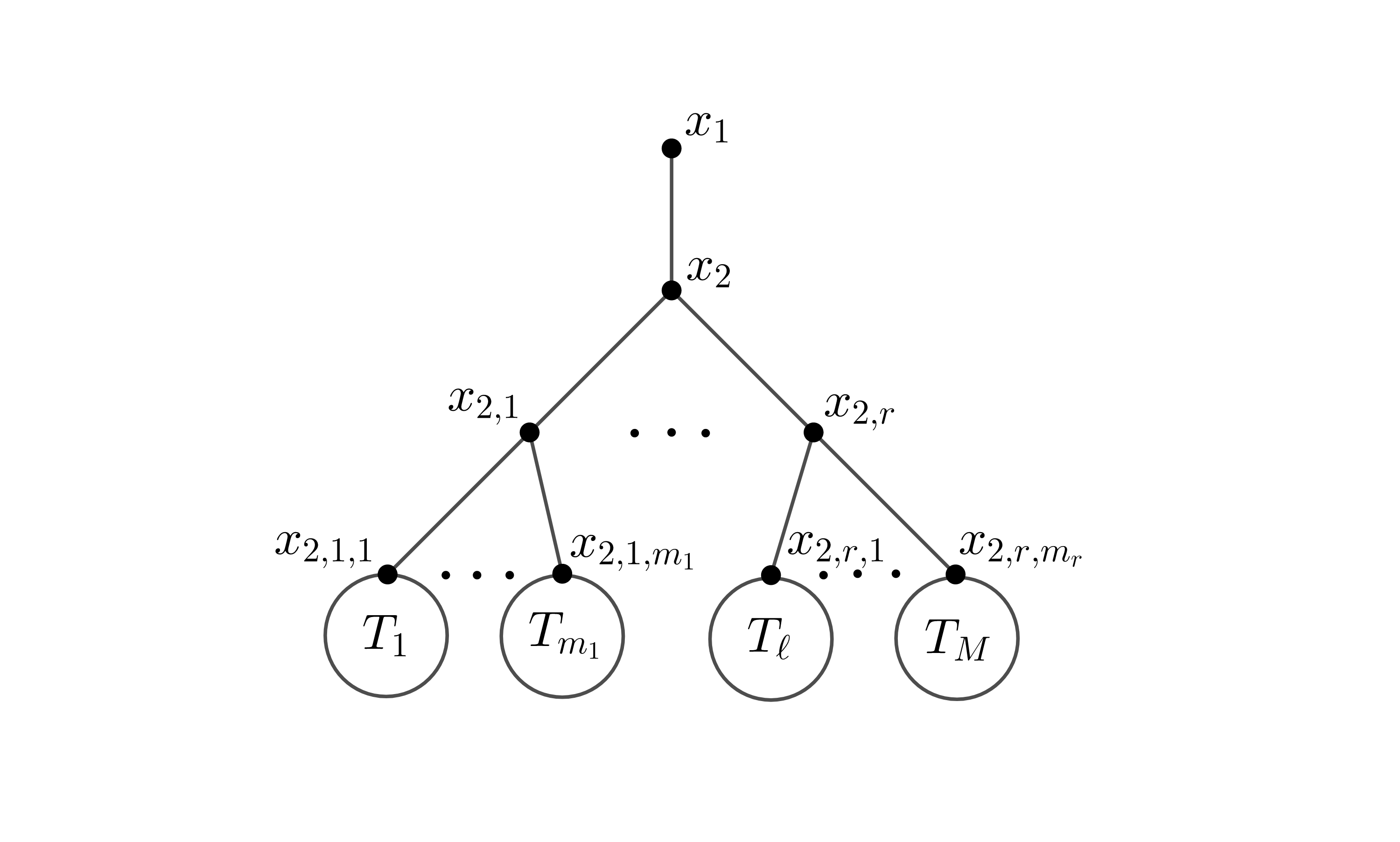}
\vspace{-.75cm}
\caption{The graph $G$ with $\ell=M-r_m-1$}
\label{fig6}
\end{figure}

\indent  We aim to resolve $Q/I_G$ minimally.  By Theorem \ref{bouchat}, this can be done by applying the iterated mapping cone construction as long as at each iteration we add a vertex of degree one.  We choose the following order to apply the iterated mapping cone construction:
\begin{align}\label{eq:4}
I_G=(x_2x_{2,1},...,x_2x_{2,r},\{x_{2,\ell}x_{2,\ell,p}{\}}_{\substack{\ell=1,...r \\ p=1,...,m_{\ell}}}, e(T_1),...e(T_{M}),x_1x_2)
\end{align}
where, abusing notation, we write $e(T_i)$ to mean the set of relations coming from the edges of the tree $T_i$, taking the edge connecting $T_i$ to the corresponding $x_{2,\ell,p}$ to be the first one.  By Corollary \ref{corbouchat} there is an ordering for each $T_i$ which will preserve minimality in the iterated mapping cone construction; we choose such an ordering for each one.  In this way, we obtain the minimal graded free resolution of $Q/I_G$.  Throughout the remainder of this section, we will write this resolution more explicitly in order to obtain our main result in the next section.
\newline
\indent  Denote by $G_1$ the graph $G\smallsetminus x_1$ and by $C$ the colon ideal $(I_{G_1}:x_1x_2)$.  It is easy to see that 
\begin{align*}
C&=(x_{2,1},...,x_{2,r},e(T_1),...,e(T_{M})) \\
&=(x_{2,1},...,x_{2,r})+\sum_{i=1}^{M}I_{T_i}.
\end{align*}
For the remainder of this paper we denote by $F^{G_1}$ and $F^C$, the minimal graded $Q$-free resolutions of $Q/I_{G_1}$ and $Q/C$, respectively.  The following fact is a key ingredient in our results.  

\begin{lemma}\label{lemmatensorresn}
The minimal graded free resolution of $Q/C$  over $Q$ is 
\begin{align*}
F^C=K(x_{2,1},...,x_{2,r};Q)\underset{Q}{\otimes}F^1\underset{Q}{\otimes}...\underset{Q}{\otimes}F^{M}
\end{align*}
where $F^i$ is the minimal graded free resolution of $Q/I_{T_i}$ for each $i$.
\end{lemma}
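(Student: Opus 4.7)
The plan is to establish the lemma as a consequence of a general fact: if $I_0, I_1, \ldots, I_N$ are ideals of $Q$ whose generators are supported in pairwise disjoint subsets of the variables $\{x_1,\ldots,x_n\}$, and $G^j$ is a minimal graded $Q$-free resolution of $Q/I_j$ for each $j$, then the tensor product
\[
G^0 \otimes_Q G^1 \otimes_Q \cdots \otimes_Q G^N
\]
is a minimal graded $Q$-free resolution of $Q/(I_0 + I_1 + \cdots + I_N)$. In the present setting, take $I_0 = (x_{2,1},\ldots,x_{2,r})$ and $I_i = I_{T_i}$ for $i = 1,\ldots,M$; these ideals satisfy the disjointness hypothesis because the $T_i$ are the connected components of the subforest $G \smallsetminus \{x_1,x_2,x_{2,1},\ldots,x_{2,r}\}$, and none of the $x_{2,\ell}$ appears in any $T_i$. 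Since $x_{2,1},\ldots,x_{2,r}$ is a regular sequence in $Q$ (distinct variables are always regular in a polynomial ring), the Koszul complex $K(x_{2,1},\ldots,x_{2,r};Q)$ is precisely the minimal graded free resolution of $Q/I_0$, and the general fact then gives the formula for $F^C$.

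To prove the general fact, I would induct on $N$. The base case $N = 0$ is immediate. For the inductive step, set $J = I_0 + \cdots + I_{N-1}$ and assume $G^0 \otimes_Q \cdots \otimes_Q G^{N-1}$ is a minimal graded $Q$-free resolution of $Q/J$. Tensoring this over $Q$ with $G^N$ produces a complex of free $Q$-modules whose $n$-th homology is $\Tor_n^Q(Q/J, Q/I_N)$. Hence it suffices to show that $\Tor_n^Q(Q/J, Q/I_N) = 0$ for $n \geq 1$; the identification of $H_0$ with $Q/(J + I_N)$ is automatic, and each module in the tensor product remains graded free.

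For the Tor vanishing I would apply the standard change-of-base trick. Write $Q = A \otimes_k B$, where $A = k[X_N]$ for $X_N$ the variables appearing in generators of $I_N$, and $B$ is the polynomial ring in the remaining variables. By hypothesis the generators of $I_N$ lie in $A$ and those of $J$ lie in $B$, so we may view $\tilde I_N \subset A$ and $\tilde J \subset B$ as descended ideals, yielding $Q$-module isomorphisms $Q/I_N \cong (A/\tilde I_N) \otimes_k B$ and $Q/J \cong A \otimes_k (B/\tilde J)$. Taking a free $A$-resolution $\tilde G$ of $A/\tilde I_N$, the complex $\tilde G \otimes_k B$ is a $Q$-free resolution of $Q/I_N$, and the identification
\[
(\tilde G \otimes_k B) \otimes_Q \bigl(A \otimes_k (B/\tilde J)\bigr) \;\cong\; \tilde G \otimes_k (B/\tilde J)
\]
together with flatness of $B/\tilde J$ over $k$ gives $\Tor_n^Q(Q/J, Q/I_N) \cong H_n(\tilde G) \otimes_k (B/\tilde J) = 0$ for $n \geq 1$, as desired.

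Minimality of the tensor product follows because each differential of $K(x_{2,1},\ldots,x_{2,r}; Q)$ and each $F^i$ has matrix entries in the graded maximal ideal $(x_1,\ldots,x_n)$, and the differential on a tensor product, being a signed sum of differentials of individual factors tensored with identities, inherits this property. The main obstacle will be the careful bookkeeping in the change-of-base identification $(\tilde G \otimes_k B) \otimes_Q (A \otimes_k B/\tilde J) \cong \tilde G \otimes_k (B/\tilde J)$, which encodes the algebraic heart of the lemma: disjoint variable supports allow the two tensor factorizations of $Q$ to commute past each other.
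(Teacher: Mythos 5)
Your proposal is correct and takes essentially the same approach as the paper: both rely on splitting $Q = A \otimes_k B$ along the disjoint variable supports and then on flatness over the field $k$ to obtain exactness of the tensored complex (the paper phrases this via the K\"unneth formula applied to $\bar K \otimes_k \bar F$, you derive the needed $\Tor^Q$ vanishing directly from the change-of-base identification). Your inductive organization and your explicit remark on minimality via differential entries lying in the maximal ideal are a slightly more careful packaging of the same underlying argument.
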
   

\begin{proof}
Let $A$ denote the subring, $k[x_{2,1},...,x_{2,r}]$ of $Q$ and let $B$ denote the polynomial subring on all other variables in $Q$ so that $Q=A\otimes_k B$.  Let $\bar{K}$ be the minimal graded free resolution of $A/J$ over $A$, where $J=(x_{2,1},...,x_{2,r})$, and let $\bar{F}$ be the minimal graded free resolution of $B/L$ over $B$, where $L=\sum_{i=1}^{M}I_{T_i}$.  Then we have that $K=\bar{K}\otimes_k B$ and $F=A\otimes_k\bar{F}$ are minimal graded free resolutions of $Q/J$ and $Q/L$, respectively, over $Q$.  We note that $K$ is precisely the Koszul complex $K(x_{2,1},...,x_{2,r};Q)$.    
\newline
\indent  Now we have that 
\begin{align*}
K\underset{Q}{\otimes}F&=(\bar{K}\underset{k}{\otimes}B)\underset{Q}{\otimes}(A\underset{k}{\otimes}\bar{F})=(\bar{K}\underset{k}{\otimes}B)\underset{A\underset{k}{\otimes} B}{\otimes}(A\underset{k}{\otimes}\bar{F})=(\bar{K}\underset{A}{\otimes}A\underset{k}{\otimes}B)\underset{A\underset{k}{\otimes} B}{\otimes}(A\underset{k}{\otimes}B\underset{B}{\otimes}\bar{F}) \\
&=\bar{K}\underset{A}{\otimes}(A\underset{k}{\otimes}B)\underset{B}{\otimes}\bar{F}=\bar{K}\underset{k}{\otimes}\bar{F}.
\end{align*}
Thus, taking homology, we see that
\begin{align*}
H_n(K\underset{Q}{\otimes}F)=H_n(\bar{K}\underset{k}{\otimes}\bar{F})=\underset{i+j=n}{\bigoplus}\Big(H_i(\bar{K})\underset{k}{\otimes}H_{j}(\bar{F})\Big),
\end{align*}
where the last equality follows from the K{\"u}nneth Formula over $k$; see for example \cite[Cor 10.84]{rotman}.  Thus $K\otimes_Q F$ is exact in all positive degrees and $H_0(K\otimes_Q F)=A/J\otimes_k B/L=Q/(I+J)=Q/C$. Minimality is clear, so we have that
\begin{align*}
K(x_{2,1},...,x_{2,r};Q)\underset{Q}{\otimes}F
\end{align*}
is the minimal graded free resolution of $Q/C$.  Noticing that $T_1,...,T_{M}$ involve disjoint sets of variables, we can apply a similar argument repeatedly to conclude that $F\cong F^1\otimes_Q...\otimes_Q F^{M}$, thus giving the desired result.  
\end{proof} 

Now we have that the minimal graded free resolution $\mathbb{F}$ of $Q/I_G$ over $Q$ is the cone of $\phi$, where $\phi$ is a comparison map given by 

\begin{align}\label{eq:3}
\begin{tikzcd}[ampersand replacement=\&]
F^C(-2) \arrow{r}\arrow{d}{\phi} \& Q/C(-2) \arrow{d}{x_1x_2} \\
F^{G_1} \arrow{r} \& Q/I_{G_1} 
\end{tikzcd}
\end{align}

\noindent Thus, we have that $\mathbb{F}$ has modules $\mathbb{F}_i=F_i^{G_1}\oplus F_{i-1}^C(-2)$, and differentials,

\begin{align}
\partial_i=
\left[
\begin{array}{c c}
\partial_i^{G_1} & \phi_{i-1} \vspace{.12cm}\\
0 & -\partial_{i-1}^C \\
\end{array}
\right]. \label{eq:1}
\end{align}

\noindent We make the following remark about the resolutions $F^1,...,F^{M}$ in Lemma \ref{lemmatensorresn}.

\begin{remk}\label{remksubcx2}
The resolutions $F^1,...,F^{M}$ are subcomplexes of $F^{G_1}$.  Indeed, a minimal resolution of $Q/I_{G_1}$ can be obtained from each $F^q$ by the iterated mapping cone construction, thus by Remark \ref{remksubcx}, each $F^q$ is a subcomplex of $F^{G_1}$.
\end{remk} 

We now aim to give an explicit description of the map $\phi$.  To obtain such a description, we first observe that it is enough to define $\phi$ on elements of the form $\alpha\otimes 1\otimes...\otimes 1$.

\begin{lemma}\label{lemmaenoughalpha}
If $\widetilde{\phi}$ is a comparison map
\begin{center}
\begin{tikzcd}[ampersand replacement=\&]
K(x_{2,1},...,x_{2,r};Q)(-2) \arrow{r}\arrow{d}{\widetilde{\phi}} \& Q/(x_{2,1},...,x_{2,r})(-2) \arrow{d}{x_1x_2} \\
F^{G_1} \arrow{r} \& Q/I_{G_1} 
\end{tikzcd} 
\end{center}
then $\phi(\alpha\otimes\beta_1\otimes...\otimes\beta_{M})=\widetilde{\phi}(\alpha)\cdot\Big(\beta_1\cdot\big(...\cdot(\beta_{M-1}\cdot\beta_{M})...\big)\Big)$ defines a comparison map in (\ref{eq:3}).
\end{lemma}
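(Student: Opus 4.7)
The plan is to verify three things: (i) the given formula defines a $Q$-linear map $\phi : F^C(-2) \to F^{G_1}$ homogeneous of homological degree zero; (ii) $\phi$ satisfies $\partial^{G_1}\phi = \phi\partial^C$; and (iii) at homological degree zero, $\phi$ lifts multiplication by $x_1x_2$. Together, these show that $\phi$ fits into the square (\ref{eq:3}) as a comparison map.

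For well-definedness, I would combine Proposition \ref{propbe} and Remark \ref{remksubcx2}. Proposition \ref{propbe} equips $F^{G_1}$ with a graded-commutative multiplication extending the usual multiplication on $Q = F_0^{G_1}$ and satisfying the Leibniz rule. Remark \ref{remksubcx2} identifies each $F^q$ with a subcomplex of $F^{G_1}$, so each $\beta_q$ and the image $\widetilde{\phi}(\alpha)$ all live in $F^{G_1}$; the specified right-to-left nesting then yields an unambiguous element there. $Q$-multilinearity of $\widetilde{\phi}$ and of the multiplication ensures that $\phi$ descends to the $Q$-tensor product $F^C = K(x_{2,1},\dots,x_{2,r};Q) \otimes_Q F^1 \otimes_Q \cdots \otimes_Q F^M$ from Lemma \ref{lemmatensorresn}, and the homological degrees add correctly since each factor in the multiplication is degree-preserving.

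To verify the chain-map condition, I would expand $\partial^{G_1}$ applied to $\widetilde{\phi}(\alpha) \cdot (\beta_1 \cdot (\cdots \cdot \beta_M))$ by applying the Leibniz rule iteratively, one binary product at a time from the outside in. At each step one substitutes $\partial^{G_1}\widetilde{\phi}(\alpha) = \widetilde{\phi}(\partial^K\alpha)$, valid since $\widetilde{\phi}$ is a chain map, or $\partial^{G_1}\beta_q = \partial^{F^q}\beta_q$, valid since $F^q$ is a subcomplex of $F^{G_1}$, each time acquiring the appropriate Koszul sign $(-1)^{|\alpha| + |\beta_1| + \cdots + |\beta_{q-1}|}$. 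The resulting expression matches $\phi(\partial^C(\alpha \otimes \beta_1 \otimes \cdots \otimes \beta_M))$ term by term, because the tensor-product differential on $F^C$ is defined by exactly the same Koszul-sign Leibniz rule. For the lift condition at homological degree zero, $\phi(1 \otimes 1 \otimes \cdots \otimes 1) = \widetilde{\phi}(1) = x_1x_2$, since $\widetilde{\phi}$ is a comparison map lifting $x_1x_2$ and the multiplication on $F^{G_1}$ extends multiplication in $Q$.

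The main subtlety to watch is that the multiplication on $F^{G_1}$ given by Proposition \ref{propbe} is not required to be associative, so in principle different nestings of the product could produce different elements of $F^{G_1}$. Because the statement pins down a specific right-to-left association, and because the Leibniz rule is local to each individual binary product, associativity is never invoked: each expansion step is unambiguous, and provided the products are unfolded in the same order on both sides, the Koszul signs match verbatim, yielding $\partial^{G_1}\phi = \phi\partial^C$.
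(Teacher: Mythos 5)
Your proposal takes essentially the same approach as the paper: expand both sides by the Leibniz rule, use that $\widetilde\phi$ is a chain map and that each $F^q$ is a subcomplex of $F^{G_1}$, and observe that the Koszul signs from the tensor-product differential on $F^C$ match those from iterated Leibniz on the nested product. Your explicit attention to the sign $(-1)^{|\alpha|+|\beta_1|+\cdots+|\beta_{q-1}|}$ and to the fact that non-associativity is harmless because the nesting is fixed are both correct refinements of what the paper does more tersely.
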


Before giving a proof, we note that each $\beta_i$ is a basis element of $F^{G_1}$ by Remark \ref{remksubcx2}.  We also note that the multiplication appearing in the definition of $\phi$ in the lemma is a multiplication on the resolution $F^{G_1}$; it has one by Proposition \ref{propbe}.  Thus this definition of $\phi$ makes sense.  

\begin{proof}
We must check that $\phi$ is a chain map.  Thus we compute
\begin{align*}
&\phi(\partial^C(\alpha\otimes\beta_1\otimes...\otimes\beta_{M})) \\
&=\phi\Big(\partial^K(\alpha)\otimes\beta_1\otimes...\otimes\beta_{M}+\sum_{i=1}^{M}(-1)^{|\alpha|+...+|\beta_i|}\alpha\otimes\beta_1\otimes...\otimes\partial^{F^i}(\beta_i)\otimes...\otimes\beta_{M}\Big) \\
&=\widetilde{\phi}(\partial^K(\alpha))\cdot\Big(\beta_1\cdot\big(...\cdot(\beta_{M-1}\cdot\beta_{M})...\big)\Big) \\
&+\sum_{i=1}^{M}(-1)^{|\alpha|+...+|\beta_i|}\,\widetilde{\phi}(\alpha)\cdot\Bigg(\beta_1\cdot\Big(...\cdot\big(\partial^{F^i}(\beta_i)\cdot(...\cdot(\beta_{M-1}\cdot\beta_{M})...)\big)...\Big)\Bigg).
\end{align*}
On the other hand, we have that
\begin{align*}
&\partial^{G_1}(\phi(\alpha\otimes\beta_1\otimes...\otimes\beta_{M}))=\partial^{G_1}\Bigg(\widetilde{\phi}(\alpha)\cdot\Big(\beta_1\cdot\big(...\cdot(\beta_{M-1}\cdot\beta_{M})...\big)\Big)\Bigg) \\
&=\partial^{G_1}(\widetilde{\phi}(\alpha))\cdot\Big(\beta_1\cdot\big(...\cdot(\beta_{M-1}\cdot\beta_{M})...\big)\Big)+(-1)^{|\widetilde{\phi}(\alpha)|}\,\widetilde{\phi}(\alpha)\cdot\partial^{G_1}\Big(\beta_1\cdot\big(...\cdot(\beta_{M-1}\cdot\beta_{M})...\big)\Big).
\end{align*}
Applying the Leibniz rule repeatedly, and by our assumption that $\widetilde{\phi}$ is a chain map, we see that $\phi(\partial^C(\alpha\otimes\beta_1\otimes...\otimes\beta_{M}))=\partial^{G_1}(\phi(\alpha\otimes\beta_1\otimes...\otimes\beta_{M}))$, which completes the proof. 
\end{proof}

Now we work towards defining a comparison map $\widetilde{\phi}$.  To accomplish this, we need to examine $F^{G_1}$ more closely.  We apply the iterated mapping cone construction in the order given in (\ref{eq:4}).  We observe that the resolution of $Q/(x_2x_{2,1},...,x_2x_{2,r})$ produced by the iterated mapping cone procedure is precisely the Taylor resolution (see for example \cite[Constr 26.5]{peeva}), which we write as follows.  Let $E$ be the exterior algebra over $k$ on basis elements $e_1,...,e_r$.  Then the minimal graded free resolution of $Q/(x_2x_{2,1},...,x_2x_{2,r})$ over $Q$ is $F$, where $F_i=Q\otimes E_i$ and the differentials are given by
\begin{align*}
\partial^F(e_{j_1}...e_{j_i})&=\sum_{\ell=1}^i(-1)^{\ell-1}\frac{\mathrm{lcm}(x_2x_{2,j_1},...,x_2x_{2,j_i})}{\mathrm{lcm}(x_2x_{2,j_1},...\widehat{x_2x_{2,j_{\ell}}},...,x_2x_{2,j_i})}e_{j_1}...\widehat{e_{j_{\ell}}},...e_{j_i} \\ 
&=\begin{cases}
\sum_{\ell=1}^i(-1)^{\ell-1}x_{2,j_{\ell}}e_{j_1}...\widehat{e_{j_{\ell}}},...e_{j_i} & i\geq 2 \\ 
x_2x_{2,j_1} & i=1
\end{cases}.
\end{align*}  
Recall that, by Remark \ref{remksubcx}, $F$ is a subcomplex of $F^{G_1}$.  
\newline 
\indent We are now ready to define $\phi$.  To set up notation, we write the Koszul complex $K(x_{2,1},...,x_{2,r};Q)$ as $Q\langle a_1,...,a_r | \partial^K(a_j)=x_{2,j}\rangle$.  We note that if $F^{G_1}$ is a DG algebra, the result below follows from standard DG algebra results; see \cite[Prop 2.1.9]{MR2641236}.  Otherwise, a different proof is needed.

\begin{prop}\label{propphi}
Define $\phi:F^C(-2)\rightarrow F^{G_1}$ by 
\begin{align*}
\phi(a_{j_1}...a_{j_i}\otimes\beta_1\otimes...\otimes\beta_{M})=x_1e_{j_1}...e_{j_i}\cdot\Big(\beta_1\cdot\big(...\cdot(\beta_{M-1}\cdot\beta_{M})...\big)\Big).  
\end{align*}
Then $\phi$ is a comparison map for 
\begin{align*}
\begin{tikzcd}[ampersand replacement=\&]
F^C(-2) \arrow{r}\arrow{d}{\phi} \& Q/C(-2) \arrow{d}{x_1x_2} \\
F^{G_1} \arrow{r} \& Q/I_{G_1} 
\end{tikzcd}.
\end{align*}
\end{prop}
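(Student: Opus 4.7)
The plan is to invoke Lemma \ref{lemmaenoughalpha} to reduce the problem to checking a comparison map on a single Koszul tensor factor. Concretely, I would define $\widetilde{\phi}: K(x_{2,1},\ldots,x_{2,r};Q)(-2) \to F^{G_1}$ by $\widetilde{\phi}(a_{j_1}\cdots a_{j_i}) = x_1 e_{j_1}\cdots e_{j_i}$, where the $e_{j_\ell}$ are the Taylor basis elements of the subcomplex of $F^{G_1}$ resolving $Q/(x_2x_{2,1},\ldots,x_2x_{2,r})$ (this subcomplex sits inside $F^{G_1}$ by Remark \ref{remksubcx2}, so the formula makes sense in $F^{G_1}$). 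The degree zero piece is forced: $\widetilde{\phi}(1)$ must equal $x_1x_2$ so that $\widetilde{\phi}$ lifts the multiplication map $x_1x_2$ at the augmentation level, and one reads the empty-product case of the formula with this convention. Once $\widetilde{\phi}$ is shown to be a comparison map, Lemma \ref{lemmaenoughalpha} produces the full $\phi$ by exactly the formula appearing in the proposition.

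Next I would verify the chain condition $\partial^{G_1}\widetilde{\phi} = \widetilde{\phi}\partial^K$ by a direct computation on basis elements. Since $x_1$ is a scalar of homological degree zero, the Leibniz rule gives $\partial^{G_1}(x_1 e_{j_1}\cdots e_{j_i}) = x_1\,\partial^{G_1}(e_{j_1}\cdots e_{j_i})$, and the explicit Taylor differential recorded earlier in the excerpt yields $\partial^{G_1}(e_{j_1}\cdots e_{j_i}) = \sum_{\ell=1}^{i}(-1)^{\ell-1}x_{2,j_\ell}\,e_{j_1}\cdots\widehat{e_{j_\ell}}\cdots e_{j_i}$. On the other hand, the Koszul differential on the $a$'s is $\partial^K(a_{j_1}\cdots a_{j_i}) = \sum_{\ell}(-1)^{\ell-1}x_{2,j_\ell}\,a_{j_1}\cdots\widehat{a_{j_\ell}}\cdots a_{j_i}$, and applying $\widetilde{\phi}$ termwise gives the same expression. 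The base case $i=1$ uses the augmentation value explicitly: $\partial^{G_1}(x_1 e_{j_1}) = x_1 x_2 x_{2,j_1} = x_{2,j_1}\cdot\widetilde{\phi}(1)$, which matches precisely because $\widetilde{\phi}(1)=x_1x_2$.

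The hardest part is not a substantive obstacle but careful bookkeeping: one must ensure the $e_{j_\ell}$ in the formula are genuinely interpreted as Taylor basis elements inside $F^{G_1}$, so that the given LCM differential applies and the products $e_{j_1}\cdots e_{j_i}$ are well-defined, and one must handle the degree zero convention consistently so that the augmentation lifts $x_1x_2$. After that, the augmentation check is immediate, since $\widetilde{\phi}(1)=x_1x_2$ reduces to $\overline{x_1x_2}$ in $Q/I_{G_1}$, matching the action of $x_1x_2$ on $\bar 1\in Q/(x_{2,1},\ldots,x_{2,r})(-2)$. Combining these verifications with Lemma \ref{lemmaenoughalpha} then delivers the formula for $\phi$ in the statement.
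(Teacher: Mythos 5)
Your proof is correct and follows the same route as the paper: reduce via Lemma~\ref{lemmaenoughalpha} to checking that $\widetilde{\phi}$ is a chain map, then verify $\partial^{G_1}\widetilde{\phi}=\widetilde{\phi}\,\partial^{K}$ directly by comparing the Taylor differential on $x_1e_{j_1}\cdots e_{j_i}$ with the Koszul differential on $a_{j_1}\cdots a_{j_i}$, with the $i=1$ base case using $\widetilde{\phi}(1)=x_1x_2$. Your explicit note that $\widetilde{\phi}(1)=x_1x_2$ is forced by the lifting condition (the displayed formula would naively read $x_1$ for an empty product) is a worthwhile clarification that the paper glosses over; the one small slip is a citation: the Taylor subcomplex sits inside $F^{G_1}$ by Remark~\ref{remksubcx} (as the paper states just before the proposition), not by Remark~\ref{remksubcx2}, which concerns the subcomplexes $F^1,\ldots,F^M$.
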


\begin{proof}
We define $\widetilde{\phi}(a_{j_1}...a_{j_i})=x_1e_{j_1}...e_{j_i}$ so that 
\begin{align*}
\phi(a_{j_1}...a_{j_i}\otimes\beta_1\otimes...\otimes\beta_{M})=\widetilde{\phi}(a_{j_1}...a_{j_i})\cdot\Big(\beta_1\cdot\big(...\cdot(\beta_{M-1}\cdot\beta_{M})...\big)\Big).
\end{align*}  
Then by Lemma \ref{lemmaenoughalpha}, it suffices to check that $\partial^{G_1}\widetilde{\phi}=\widetilde{\phi}\,\partial^{K}$.  For $i\geq 2$, we compute
\begin{align*}
\phi(\partial^C(a_{j_1}...a_{j_i}\otimes 1\otimes...\otimes 1))&=\phi\Big(\sum_{\ell=1}^i(-1)^{\ell-1}x_{2,j_{\ell}}a_{j_1}...\widehat{a_{j_{\ell}}}...a_{j_i}\otimes 1\otimes...\otimes 1\Big) \\
&=\sum_{\ell=1}^i(-1)^{\ell-1}x_1x_{2,j_{\ell}}e_{j_1}...\widehat{e_{j_{\ell}}}...e_{j_i} \\
&=\partial^{G_1}(x_1e_{j_1}...e_{j_i}) \\
&=\partial^{G_1}(\phi(a_{j_1}...a_{j_i}\otimes 1\otimes...\otimes 1)).
\end{align*}  
And, for any $j$, we have 
\begin{align*}
\widetilde{\phi}(\partial^K(a_j))=\widetilde{\phi}(x_{2,j})=x_1x_2x_{2,j}=\partial^{G_1}(x_1e_j)=\partial^{G_1}(\widetilde{\phi}(a_j))
\end{align*}
which completes the proof.
\end{proof}

To summarize the discussions in this section, we decompose $F^C(-2)$ and $F^{G_1}$ as 
\begin{align*}
F_i^C(-2)&=K_i(x_{2,1},...,x_{2,r};Q)(-2)\oplus\widetilde{F_i^C} \\
F_i^{G_1}&=F_i\oplus\widetilde{F_i^{G_1}}
\end{align*}
and think of the minimal graded free resolution $\mathbb{F}$ of $Q/I_G$ as the cone of the diagram,

\begin{align}\label{eq:12}
\begin{tikzcd}[ampersand replacement=\&]
\arrow{r} \& Q(-(r+2))\oplus\widetilde{F_{r}^C} \arrow{r}\arrow{ddd}{\begin{bmatrix}
  x_1 & *\\
  0 & *
\end{bmatrix}} \& \hspace{-2.5cm} \& \arrow{r} \& Q(-4)^{{r\choose 2}}\oplus\widetilde{F_2^C} \arrow{r}\arrow{ddd}{\begin{bmatrix}
  x_1 & *\\
  0 & *
\end{bmatrix}} \& Q(-3)^r\oplus\widetilde{F_1^C} \arrow{r}\arrow{ddd}{\begin{bmatrix}
  x_1 & *\\
  0 & *
\end{bmatrix}} \& Q(-2) \arrow{ddd}{x_1x_2} \\ \\ \\
\arrow{r} \& Q(-(r+1))\oplus\widetilde{F_r^{G_1}} \arrow{r} \& \hspace{-2.5cm} \& \arrow{r} \& Q(-3)^{{r\choose 2}}\oplus\widetilde{F_2^{G_1}} \arrow{r} \& Q(-2)^r\oplus\widetilde{F_1^{G_1}} \arrow{r} \& Q 
\end{tikzcd}
\end{align}

\noindent The following corollary is an immediate consequence of the construction and diagram above.

\begin{cor}\label{cornewlinstrgen}
For $\alpha\in K(x_{2,1},...,x_{2,r};Q)$ and $\ell=|\alpha|+1$, the elements $\Phi(\alpha\otimes 1\otimes...\otimes 1)$ are generators of $H_{\ell}(R)$ of internal degree $\ell+1$, thus they lie on the lowest linear strand.\hfill\qed
\end{cor}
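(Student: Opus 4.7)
The corollary is a bigraded bookkeeping consequence of the cone construction of $\mathbb{F}$ together with the isomorphism $\Phi$ from (\ref{eq:6}). My plan is simply to track the homological and internal degrees of $\alpha\otimes 1\otimes\cdots\otimes 1$ through each layer of the construction set up in this section.

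First I would reduce to the case that $\alpha=a_{j_1}\cdots a_{j_{\ell-1}}$ is a basis monomial in $K(x_{2,1},\ldots,x_{2,r};Q)$. Since each $a_j$ lives in homological degree one with differential $\partial^K(a_j)=x_{2,j}$ a linear form, each $a_j$ must itself have internal degree one, so $\alpha$ has both homological and internal degree equal to $\ell-1$. By Lemma \ref{lemmatensorresn}, tensoring with $1\in F^q_0=Q$ for each $q=1,\ldots,M$ preserves bidegree, so $\alpha\otimes 1\otimes\cdots\otimes 1$ is a basis element of $F^C_{\ell-1}$ of internal degree $\ell-1$.

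Next, the internal shift $(-2)$ built into the short exact sequence of Construction \ref{coneconstr} upgrades this to a basis element of $F^C_{\ell-1}(-2)$ of internal degree $\ell+1$, and the cone decomposition $\mathbb{F}_\ell=F^{G_1}_\ell\oplus F^C_{\ell-1}(-2)$ from (\ref{eq:1}) then identifies it with a basis element of $\mathbb{F}_\ell$ of internal degree $\ell+1$; indeed it sits inside the summand $Q(-(\ell+1))^{\binom{r}{\ell-1}}$ visible in diagram (\ref{eq:12}).

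To conclude, I would apply the isomorphism $\Phi:\mathbb{F}\otimes_Q k\to H(R)$, which preserves internal degrees and carries a homogeneous basis of $\mathbb{F}_\ell\otimes k$ to a basis of $H_\ell(R)$. Thus $\Phi(\alpha\otimes 1\otimes\cdots\otimes 1)$ is a generator of $H_\ell(R)_{\ell+1}$, and hence lies on the lowest linear strand $\bigoplus_i H_i(R)_{i+1}$. There is no genuine obstacle: the entire argument is a direct read-off from the bigraded structure set up in Section 3, and the only subtlety is ensuring that the internal degree $\ell-1$ of $\alpha$ combines correctly with the cone shift $(-2)$ to yield $\ell+1$.
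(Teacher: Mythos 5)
Your proof is correct and is essentially the argument the paper has in mind: the corollary is stated as an immediate consequence of diagram (\ref{eq:12}) with no written proof, and your bidegree bookkeeping — $\alpha$ in bidegree $(\ell-1,\ell-1)$, the cone shift $(-2)$ raising internal degree to $\ell+1$, and $\Phi$ carrying the basis element of $\mathbb{F}_\ell\otimes k$ to a generator of $H_\ell(R)_{\ell+1}$ — is exactly the read-off being invoked.
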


We conclude this section by noting that the constructions above provide a way of counting the Betti numbers on the linear strand of $\mathbb{F}$, and equivalently the generators on the lowest linear strand of $H(R)$.  In particular, we recover the following result of Roth and Van Tuyl \cite[Cor 2.6]{rothvantuyl}.

\begin{cor}
Let $G$ be a tree.  Then $\beta_{1,2}(Q/I_G)=|e(G)|$ and 
\begin{align*}
\beta_{i,i+1}(Q/I_G)=\sum_{v\in G}{\mathrm{deg}(v) \choose i}
\end{align*}
for all $i\geq 2$.  
\end{cor}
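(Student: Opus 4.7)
The equation $\beta_{1,2}(Q/I_G) = |e(G)|$ is immediate, since the minimal generators of $I_G$ are the quadratic monomials indexed by the edges of $G$. For the second formula, I plan to induct on $|V(G)|$; the base case is a tree with one vertex or a single edge, where both sides of the desired identity vanish for $i \geq 2$.

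For the inductive step, fix a leaf $x_1$ of $G$ adjacent to a vertex $x_2$ and let $G_1 = G \setminus x_1$, which is a tree with one fewer vertex. Using the notation of Section 3 with $r = \deg_G(x_2) - 1$, Corollary \ref{corbouchat} says that the mapping cone $\mathbb{F}$ of the comparison map $\phi \colon F^C(-2) \to F^{G_1}$ in (\ref{eq:3}) is a minimal graded free resolution of $Q/I_G$. Since $\mathbb{F}_i = F_i^{G_1} \oplus F_{i-1}^C(-2)$ and $\mathbb{F}$ is minimal, the Betti numbers add:
\begin{align*}
\beta_{i,i+1}(Q/I_G) = \beta_{i,i+1}(Q/I_{G_1}) + \operatorname{rank}\bigl(F_{i-1}^C\bigr)_{i-1}.
\end{align*}
To identify the correction, I would apply Lemma \ref{lemmatensorresn} to decompose $F^C = K(x_{2,1}, \ldots, x_{2,r}; Q) \otimes_Q F^1 \otimes_Q \cdots \otimes_Q F^M$, where each $F^q$ minimally resolves $Q/I_{T_q}$. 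A summand of $F_{i-1}^C$ has the form $K_a \otimes F^1_{b_1} \otimes \cdots \otimes F^M_{b_M}$ with $a + \sum_q b_q = i - 1$. Since each $Q/I_{T_q}$ is Koszul (being the quotient by the edge ideal of a tree), $F^q_{b_q}$ has internal degrees at least $b_q + 1$ when $b_q \geq 1$, while $K_a$ has pure internal degree $a$. Demanding total internal degree $i - 1$ therefore forces every $b_q = 0$ and $a = i-1$, which gives $\bigl(F_{i-1}^C\bigr)_{i-1} \cong Q^{\binom{r}{i-1}}$.

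Feeding the recursion back into the inductive hypothesis, and observing that $\deg_G(v) = \deg_{G_1}(v)$ for $v \neq x_1, x_2$, that $\deg_G(x_1) = 1$ (contributing $0$ for $i \geq 2$), and that $\deg_{G_1}(x_2) = r$, Pascal's identity $\binom{r+1}{i} - \binom{r}{i} = \binom{r}{i-1}$ matches the two sides and closes the induction. The main obstacle I anticipate is verifying that no non-pure-Koszul summand of $F_{i-1}^C$ contributes in internal degree $i - 1$; this is exactly what collapses the recursion to the clean binomial formula, and it rests on the Koszulness of each $Q/I_{T_q}$ together with the tight hom-degree/internal-degree coupling in $K(x_{2,1}, \ldots, x_{2,r}; Q)$.
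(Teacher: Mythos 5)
Your proposal is correct and follows essentially the same route as the paper: resolve $Q/I_G$ by the mapping cone of $\phi\colon F^C(-2)\to F^{G_1}$, use the tensor decomposition $F^C = K(x_{2,1},\ldots,x_{2,r};Q)\otimes_Q F^1\otimes_Q\cdots\otimes_Q F^M$ from Lemma~\ref{lemmatensorresn} to isolate the linear-strand contribution $\binom{r}{i-1}$, and close the induction with Pascal's rule. The paper phrases the last step as an inequality followed by a separate argument that no summand of $\widetilde{F^C_{i-1}}$ lands on the linear strand, whereas you write the equality directly from minimality of $\mathbb{F}$; these are the same computation.

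One misattribution is worth correcting. You justify the bound ``$F^q_{b_q}$ has internal degrees at least $b_q+1$ when $b_q\geq 1$'' by the Koszulness of $Q/I_{T_q}$. Koszulness constrains the internal degree from \emph{above} (namely $H_i(R)_j = 0$ for $j > 2i$); it is not what gives the lower bound you need. The lower bound is simply a consequence of minimality: since $I_{T_q}$ is generated in degree $2$ and the differentials of a minimal resolution have entries in $\mathfrak{m}$, the generators of $F^q_{b_q}$ have internal degree $\geq b_q + 1$ for every $b_q\geq 1$. This is exactly the argument the paper makes (``by minimality we have $\mathrm{intdeg}(e)\geq\cdots$''). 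With that substitution your proof is correct and the anticipated ``main obstacle'' dissolves: it does not rest on Koszulness at all, only on the degree-2 generation of the edge ideals together with minimality of the $F^q$.
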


\begin{proof}
We use induction on the number of edges in $G$.  For the base case we consider the tree with one edge.  In this case, $I_G=(x_1x_2)$ and the minimal graded free resolution $F$ of $R$ is 
\begin{align*}
0\longrightarrow Q(-2)\overset{x_1x_2}{\longrightarrow}Q\longrightarrow 0.
\end{align*}
Thus we see that $\beta_{1,2}(Q/I_G)=1=|e(G)|$.  
\newline
\indent  Now take $G$ to be any tree and assume that the result is true for every tree with strictly fewer edges.  We obtain the minimal graded free resolution $\mathbb{F}$ of $Q/I_G$ as the cone of the diagram (\ref{eq:12}) constructed in this section.  We count the Betti numbers on the linear strand as follows.  From (\ref{eq:12}) and Corollary \ref{cornewlinstrgen}, we see that for $i\geq 2$ we have
\begin{align*}
\beta_{i,i+1}(Q/I_G)\geq \beta_{i,i+1}(Q/I_{G_1})+{r \choose i-1}=\sum_{v\in G_1}{\mathrm{deg}_{G_1}(v) \choose i}+{r \choose i-1} 
\end{align*}   
where the equality follows from induction.  Separating the summand corresponding to $x_2$ from the rest of the sum, we get that 
\begin{align*}
\beta_{i,i+1}(Q/I_G)\geq\sum_{x_2\neq v\in G_1}{\mathrm{deg}(v) \choose i}+{r \choose i}+{r \choose i-1}=\sum_{x_2\neq v\in G_1}{\mathrm{deg}(v) \choose i}+{r+1 \choose i}
\end{align*}    
where the equality is just the identity called Pascal's Rule.  We note that $r+1$ is precisely the degree of $x_2$ in $G$.  Thus we have the inequality
\begin{align*}
\beta_{i,i+1}(Q/I_G)\geq \sum_{v\in G}{\mathrm{deg}(v) \choose i}.
\end{align*} 
\indent To show equality, it suffices to take $e$ to be any basis element of $\widetilde{F_{i-1}^C}$ and show that it cannot be on the linear strand.  By Lemma \ref{lemmatensorresn}, we have that $e=\alpha\otimes\beta_1\otimes...\otimes\beta_{M}$, for some basis elements $\alpha$ of $K_{\ell}(x_{2,1},...,x_{2,r})$ and $\beta_{p}$ of $(F^p)_{i_p}$, where $\ell+i_1+...+i_{M}=i-1$ and at least one $\beta_{p}\neq 1$.  In the following computations, we denote by $|\cdot|$ the homological degree and by $\mathrm{intdeg}(\cdot)$ the internal degree in $\mathbb{F}$.  We denote by $\mathrm{intdeg}_C(\cdot)$ the internal degree in $F^C$.   We have that
\begin{align*}
\mathrm{intdeg}(e)&=\mathrm{intdeg}_C(e)+2 \\
&=\mathrm{intdeg}_C(\alpha\otimes 1\otimes...\otimes 1)+\mathrm{intdeg}_C(1\otimes\beta_1\otimes...\otimes 1)+...+\mathrm{intdeg}_C(1\otimes 1\otimes ...\otimes\beta_M)+2 \\
&=\mathrm{intdeg}(\alpha\otimes 1\otimes...\otimes 1)+\mathrm{intdeg}(\beta_1)+...+\mathrm{intdeg}(\beta_M)
\end{align*}
where the first equality follows from the fact that $\mathbb{F}=\mathrm{cone}(F^C(-2)\rightarrow F^{G_1})$, and the last equality follows from this same fact and also from Remark \ref{remksubcx2}.  Now by minimality we have that
\begin{align*}
\mathrm{intdeg}(e)&\geq|\alpha\otimes 1\otimes...\otimes 1|+|\beta_1|+...+|\beta_M|+M+1 \\
&=\ell+i_1+...+i_M+M+2 \\
&=i+M+1 \\
&>i+1.
\end{align*}
Therefore, since $|e|=i$, $e$ cannot possibly be on the linear strand, and we have equality.  The desired formula for $\beta_{1,2}(Q/I_G)$ clearly holds.
\end{proof}

\section{The Main Result}
\paragraph
\indent  In this section, we show that Question \ref{quest} has a positive answer for edge ideals of trees, and thus also for forests.  Throughout this section we let $G$ be a tree and let $Q$ be a standard graded polynomial ring over any field $k$ with variables given by the vertices in $G$.  Denote by $N$ the set indices for the vertices, so that $Q=k[\{x_n\}_{n\in N}]$.  Let $\Phi_{G_1}$ be the isomorphism $F^{G_1}\otimes_Q k\rightarrow H(Q/I_{G_1})$ as in (\ref{eq:6}).      

\begin{lemma}\label{lemmainducedalgmap}
The canonical map of $k$-algebras
\begin{align*}
\theta:H(Q/I_{G_1})\longrightarrow H(R)
\end{align*}
induced by the surjection $Q/I_{G_1}\rightarrow Q/I_G=R$ satisfies the equality $\theta(\Phi_{G_1}(e\otimes\bar{1}))=\Phi(e\otimes\bar{1})$ for any $e\in F^{G_1}$.
\end{lemma}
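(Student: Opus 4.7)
The plan is to evaluate both sides using the Herzog--Maleki cycles of Theorem~\ref{thmhm} and to observe that they descend from a single element of $K(x_1,\dots,x_n;Q)$.

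First I would record the structure of $\mathbb{F}$. By Remark~\ref{remksubcx} applied to the comparison map $\phi$ of Proposition~\ref{propphi}, the complex $F^{G_1}$ is a subcomplex of $\mathbb{F}=\mathrm{cone}(\phi)$. I would choose a basis of each $F_i^{G_1}$ and extend it to a basis of $\mathbb{F}_i=F_i^{G_1}\oplus F_{i-1}^C(-2)$ by adjoining a basis of $F_{i-1}^C(-2)$. By the block shape of the cone differential in~(\ref{eq:1}), for every basis element $e=e_j^i$ of $F^{G_1}_i$ we have $\partial^{\mathbb{F}}(e)=\partial^{G_1}(e)$, written in the chosen basis with the same structure constants $f_{k,j}^i$ as in $F^{G_1}$ and with all coefficients on basis vectors of $F_{i-2}^C(-2)$ equal to zero. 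Iterating this observation down the resolution, the only nonzero terms in the Herzog--Maleki formula for $z_{i,j}$ computed in $\mathbb{F}$ are those indexed by basis vectors of $F^{G_1}$, and these match the corresponding terms of the formula computed in $F^{G_1}$. Hence a single element $z_{i,j}\in K(x_1,\dots,x_n;Q)$ represents both $\Phi_{G_1}(e\otimes\bar 1)$, after reduction modulo $I_{G_1}$, and $\Phi(e\otimes\bar 1)$, after reduction modulo $I_G$.

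Next I would identify $\theta$ at the chain level: the surjection $Q/I_{G_1}\twoheadrightarrow R$ induces the obvious quotient map $K(x_1,\dots,x_n;Q/I_{G_1})\to K(x_1,\dots,x_n;R)$, and so $\theta$ on homology sends $[\bar z_{i,j}]$ to the class of the further reduction of $z_{i,j}$ modulo $I_G$, which is exactly $\Phi(e\otimes\bar 1)$. The substantive content, and the main obstacle, lies in the first step: one must verify carefully that extending a chosen basis of $F^{G_1}$ to a basis of $\mathbb{F}$ leaves the relevant structure constants of $\partial^{\mathbb{F}}$ on $F^{G_1}$ unchanged. This is immediate from the block upper-triangular form of~(\ref{eq:1}), but it is precisely what makes the Herzog--Maleki cycles for the two resolutions agree.
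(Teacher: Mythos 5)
Your proof is correct and takes essentially the same approach as the paper, which simply cites the subcomplex inclusion $F^{G_1}\subseteq\mathbb{F}$ (Remark~\ref{remksubcx}) and declares the equality clear. You usefully spell out why it is clear: the block upper-triangular form of the cone differential~(\ref{eq:1}) forces the Herzog--Maleki cycle for $e\in F^{G_1}$ computed in $\mathbb{F}$ to coincide with the one computed in $F^{G_1}$, and $\theta$ is just further reduction of that common representative modulo $I_G$.
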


\begin{proof}
First recall that the quotient map
\begin{align*}
Q/I_G\longrightarrow Q/I=R
\end{align*}
induces the map of DG algebras 
\begin{align*}
K(Q/I_G)\longrightarrow K(R)
\end{align*}
that sends $dx_i$ to $dx_i$ for all $i$, and the induced map 
\begin{align*}
\theta:H(Q/I_{G_1})\longrightarrow H(R)
\end{align*}
on homology is a map of $k$-algebras.  Since $F^{G_1}$ is a subcomplex of $\mathbb{F}$ (see Remark \ref{remksubcx}), it is clear that the equality $\theta(\Phi_{G_1}(e\otimes\bar{1}))=\Phi(e\otimes\bar{1})$ holds.  
\end{proof}

Now we show that the Koszul homology algebra of the quotient by $I_G$ is generated by the lowest linear strand.

\begin{thm}\label{thmmain}
If $R=Q/I_G$, then $H(R)$ is generated by $\bigoplus_i H_i(R)_{i+1}$ as a $k$-algebra.
\end{thm}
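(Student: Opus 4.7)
The approach is induction on the number of edges $|e(G)|$. In the base case $|e(G)|=1$, the ideal $I_G=(x_1x_2)$ gives $H(R)=H_0(R)\oplus H_1(R)_2$, which is manifestly generated by its lowest linear strand. For the inductive step, fix a leaf $x_1$ of $G$ with neighbor $x_2$ and set $G_1=G\setminus x_1$, a tree with strictly fewer edges. By Corollary~\ref{corbouchat} and the construction of Section~3, the minimal graded free resolution $\mathbb{F}$ of $R$ is the cone of $\phi\colon F^C(-2)\to F^{G_1}$, and Lemma~\ref{lemmatensorresn} gives $F^C = K(x_{2,1},\ldots,x_{2,r};Q)\otimes F^1\otimes\cdots\otimes F^M$. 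A $k$-basis of $H(R)$ is then given by the classes $\Phi(e\otimes\bar{1})$, as $e$ ranges over basis elements of $\mathbb{F}$, partitioned into two types: (a) $e$ is a basis element of $F^{G_1}$, and (b) $e=\alpha\otimes\beta_1\otimes\cdots\otimes\beta_M$ is a tensor basis element of $F^C$, with $\alpha$ a basis element of $K(x_{2,1},\ldots,x_{2,r};Q)$ and each $\beta_p$ a basis element of $F^p$.

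For a type (a) basis element, Lemma~\ref{lemmainducedalgmap} identifies $\Phi(e\otimes\bar{1})$ with $\theta(\Phi_{G_1}(e\otimes\bar{1}))$, where $\theta\colon H(Q/I_{G_1})\to H(R)$ is the bidegree-preserving $k$-algebra map induced by the surjection $Q/I_{G_1}\to R$. The inductive hypothesis applied to $G_1$ writes $\Phi_{G_1}(e\otimes\bar{1})$ as a polynomial in the lowest linear strand of $H(Q/I_{G_1})$; applying $\theta$ then places $\Phi(e\otimes\bar{1})$ in the subalgebra of $H(R)$ generated by its lowest linear strand.

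For a type (b) basis element the central step is to establish the factorization
\begin{align*}
\Phi(\alpha\otimes\beta_1\otimes\cdots\otimes\beta_M\otimes\bar{1}) = \Phi(\alpha\otimes 1\otimes\cdots\otimes 1\otimes\bar{1})\cdot\Phi(\beta_1\otimes\bar{1})\cdots\Phi(\beta_M\otimes\bar{1})
\end{align*}
in $H(R)$, where on the right each $\beta_p$ is viewed as a basis element of $F^{G_1}$ via Remark~\ref{remksubcx2}. Granting this, the first factor lies on the lowest linear strand by Corollary~\ref{cornewlinstrgen}, and each remaining factor lies in the subalgebra generated by the lowest linear strand by the argument of the previous paragraph, so the product does too. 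The factorization will be proved by direct computation with the Herzog--Maleki formula of Theorem~\ref{thmhm}: writing $\alpha=a_{j_1}\cdots a_{j_\ell}$, the multiplicative description of $\phi$ in Proposition~\ref{propphi} shows that the cone component of the differential of $e$ factors as $x_1 e_{j_1}\cdots e_{j_\ell}\cdot(\beta_1\cdots\beta_M)$ in $F^{G_1}$, while the tensor-block form of $\partial^{F^C}$ ensures that every iterated differential on $e$ splits across the factors of $F^C$. Lemma~\ref{lemmaoperators}, which forces each operator $d^r$ on a monomial to pick out the smallest-indexed variable, then confines each $d^{k_s}$ to a single tensor factor, so the iterated sum defining $z_{i,j}$ collapses into the product of the $z$-expressions associated to $\alpha\otimes 1\otimes\cdots\otimes 1$ and to the individual $\beta_p$.

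The main obstacle is the combinatorial bookkeeping required to verify this factorization. The Herzog--Maleki expression for $\Phi(e\otimes\bar{1})$ is indexed by many tuples $(k_1<\cdots<k_i)$ together with auxiliary indices at each homological level, and one must argue that the nonvanishing contributions are precisely those in which these indices distribute uniquely across the tensor factors $K(x_{2,1},\ldots,x_{2,r};Q),F^1,\ldots,F^M$, with a single $k_s$ accounting for the cone crossing via $\phi$ and the remaining indices confined to one tensor factor each. The disjointness of the variable sets supporting $K(x_{2,1},\ldots,x_{2,r};Q)$ and each $F^p$ (built into the structural setup of Section~3) together with Lemma~\ref{lemmaoperators} make this distribution rigid, at which point the sum recombines as the claimed product and the induction concludes.
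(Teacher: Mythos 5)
Your proposal is correct and follows essentially the same route as the paper's proof: induction on $|e(G)|$, the same two-case split according to whether $e$ lies in $F^{G_1}$ or $F^C$, Lemma~\ref{lemmainducedalgmap} for the first case, and for the second case the same factorization $\Phi(\alpha\otimes\beta_1\otimes\cdots\otimes\beta_M\otimes\bar{1}) = \Phi(\alpha\otimes 1\otimes\cdots\otimes 1\otimes\bar{1})\cdot\Phi(\beta_1\otimes\bar{1})\cdots\Phi(\beta_M\otimes\bar{1})$, established by the same mechanism of choosing the variable order and invoking Lemma~\ref{lemmaoperators} together with Proposition~\ref{propphi} to collapse the Herzog--Maleki sum across tensor factors. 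The obstacle you flag (the index bookkeeping in the Herzog--Maleki expression) is precisely what the paper's proof carries out in detail, and your sketch of how it resolves matches the paper's argument.
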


\begin{proof}
We use induction on the number of edges in $G$.  For the base case, we consider the tree with one edge.  In this case, $I_G=(x_1x_2)$ and the minimal graded free resolution $F$ of $R$ is 
\begin{align*}
0\longrightarrow Q(-2)\overset{x_1x_2}{\longrightarrow}Q\longrightarrow 0.
\end{align*}
Thus applying the isomorphism $\Phi$ from (\ref{eq:6}) to $F\otimes k$, we see that the only basis element of $H_1(R)$ lies in $H_1(R)_2$.  Hence $H(R)$ is trivially generated by the lowest linear strand.
\newline
\indent  Now take $G$ to be any tree and assume that the result is true for every tree with strictly fewer edges.  Let $\mathbb{F}$ be the minimal graded resolution of $R$ over $Q$ constructed in Section 3 and fix the basis of each $H_i(R)$ given in Theorem \ref{thmhm}.  It is enough to show that each basis element of $H_i(R)$ is in the subalgebra generated by $\bigoplus_j H_j(R)_{j+1}$.  Thus, we take $h$ to be any basis element of $H_i(R)$.  Then $h=\Phi(e\otimes\bar{1})$, for some basis element $e$ of $\mathbb{F}_i$.  We have that $\mathbb{F}$ is the cone of the map
\begin{align*}
F^C\overset{\phi}{\longrightarrow}F^{G_1}
\end{align*}
defined in Proposition \ref{propphi}.  Thus $\mathbb{F}_i=F_i^{G_1}\oplus F_{i-1}^C$ and $e$ must either be a basis element of $F_i^{G_1}$ or of $F_{i-1}^C$.  
\newline
\indent  We first consider the case where $e$ is a basis element of $F_i^{G_1}$.  By Lemma \ref{lemmainducedalgmap}, $h=\theta(\Phi_{G_1}(e\otimes\bar{1}))$, but by the induction hypothesis, $H(Q/I_{G_1})$ is generated by the lowest linear strand.  So we have that 
\begin{align*}
\Phi_{G_1}(e\otimes\bar{1})=\sum_{\lambda\in\Lambda} c_{\lambda} \prod_{\ell,m} \Phi_{G_1}(e_{\ell}^{m,m+1}\otimes\bar{1})^{\lambda_{\ell,m}} 
\end{align*}
where $\Lambda$ is a finite set of tuples $\lambda=(\lambda_{\ell,m})$ and for $\ell=1,...,b_{m,m+1}$ the elements $e_{\ell}^{m,m+1}$ are basis elements of $F_m^{G_1}$ of internal degree $m+1$, and where $c_{\lambda}\in k$.  Now we have that 
\begin{align*}
h&=\theta\left(\sum_{\lambda\in\Lambda} c_{\lambda} \prod_{\ell,m} \Phi_{G_1}(e_{\ell}^{m,m+1}\otimes\bar{1})^{\lambda_{\ell,m}}\right)=\sum_{\lambda\in\Lambda} c_{\lambda} \prod_{\ell,m}\theta(\Phi_{G_1}(e_{\ell}^{m,m+1}\otimes\bar{1}))^{\lambda_{\ell,m}} \\
&=\sum_{\lambda\in\Lambda} c_{\lambda} \prod_{\ell,m}\Phi(e_{\ell}^{m,m+1}\otimes\bar{1})^{\lambda_{\ell,m}}
\end{align*} 
by Lemma \ref{lemmainducedalgmap}.  The elements $\Phi(e_{\ell}^{m,m+1}\otimes\bar{1})$ are basis elements of $H_m(R)$ that are in $H_m(R)_{m+1}$, thus $h$ is generated in the subalgebra generated by the linear strand.
\newline
\indent Now we assume that $e$ is a basis element of $F_{i-1}^C$.  Then, by Lemma \ref{lemmatensorresn}, $e=\alpha\otimes\beta_1\otimes...\otimes\beta_{M}$, for some basis elements $\alpha$ of $K_{\ell}(x_{2,1},...,x_{2,r})$ and $\beta_{p}$ of $(F^p)_{i_p}$, where $\ell+i_1+...+i_{M}=i-1$.  By Theorem \ref{thmhm}, $h=[\bar{g}]$, where
\begin{align}\label{eq:5}
g=\sum_{\{k_1<...<k_i\}\subseteq N}\sum_{j_1=1}^{b_1}...\sum_{j_{i-1}=1}^{b_{i-1}}d^{k_i}(f_{j_{i-1},j_i}^i)...d^{k_2}(f_{j_1,j_2}^2)d^{k_1}(f_{1,j_1}^1)dx_{k_1}...dx_{k_i}
\end{align}
and each $f_{i,j}^k$ is the $(i,j)$-th entry in the $k$th differential of $\mathbb{F}$ when viewing the differentials as matrices with respect some fixed bases.  
\newline
\indent  Recall that, the operators $d^k$ depend on the order of the variables, so we fix an ordering on the variables in $Q$ as follows
\begin{align*}
x_1<x_2<x_{2,\ell}<v(T_1)<...<v(T_{M})
\end{align*} 
for all $\ell$, where by $v(T_q)$ we mean the variables given by the vertices in the tree $T_q$ listed in some fixed order.  
\newline 
\indent Now we analyze the terms in (\ref{eq:5}) more carefully in order to remove the initial sum.  We note that since the differentials of $\mathbb{F}$ are given by (\ref{eq:1}), we have that for each set $\{j_1,...,j_{i-1}\}$, there is some $m$ such that $f_{j_{i-1},j_i}^i,...,f_{j_{m},j_{m+1}}^{m+1}$ are entries of $\partial^C$, $f_{j_{m-1},j_m}^m$ is an entry of $\phi_{m-1}$, and $f_{j_{m-2},j_{m-1}}^{m-1},...,f_{1,j_{1}}^{1}$ are entries of $\partial^{G_1}$.  That is, for each $p=1,...,m-1$, we have that $1\leq j_p\leq \beta_p(Q/I_{G_1})$, and for $p=m,...,i$, we have $\beta_p(Q/I_{G_1})+1\leq j_p\leq b_p$.  Then, in particular, we have that $f_{j_{m-1},j_m}^m\in (x_1)$ by Proposition \ref{propphi}.  Thus by Lemma \ref{lemmaoperators}, we have that $d^{k_m}(f_{j_{m-1},j_m}^m)=0$ unless $k_m=1$.  Also since $k_1<...<k_m<...<k_i$, our fixed ordering on the variables implies that $m=1$.  So, every term in the sum with $k_m\neq 1$ or $m\neq 1$ vanishes, giving
\begin{align}
g=\sum_{\{1<k_2<...<k_i\}\subseteq N}\sum_{j_1=a_1}^{b_1}...\sum_{j_{i-1}=a_{i-1}}^{b_{i-1}}d^{k_i}(f_{j_{i-1},j_i}^i)...d^{k_2}(f_{j_1,j_2}^2)d^{1}(f_{1,j_1}^1)dx_{1}dx_{k_2}...dx_{k_i}
\end{align}
where $a_p=\beta_p(Q/I_{G_1})+1$, and with $f_{j_{i-1},j_i}^i,...,f_{j_{1},j_{2}}^{2}$ entries of $\partial^C$, hence monomials, and $f_{1,j_1}^1$ an entry of $\phi_0$.  We note that $b_1=a_1$ since $F_0^C=Q$, thus there is only one possible index $j_1$, and the sum over $j_1$ can be removed.  Also, $\phi_0$ is given by multiplication by $x_1x_2$, thus $f_{1,j_1}^1=x_1x_2$.
\newline
\indent  Similarly by Lemma \ref{lemmaoperators}, for each $f_{j_{p-1},j_p}^p$ with $p>1$, there is only one value of $k_p$ such that $d^{k_p}(f_{j_{p-1},j_p}^p)$ is nonzero.  Thus, we see that for each set $\{j_1,...,j_{i-1}\}$, there exist unique $k_p=k_q(j_1,...,j_{i-1})$, for $p=2,...,i$, such that  
\begin{align}
g=\sum_{j_2=a_2}^{b_2}...\sum_{j_{i-1}=a_{i-1}}^{b_{i-1}}d^{k_i}(f_{j_{i-1},j_i}^i)...d^{k_2}(f_{j_1,j_2}^2)d^{1}(x_1x_2)dx_{1}dx_{k_2}...dx_{k_i} \label{eq:2}
\end{align}
with $f_{j_{i-1},j_i}^i,...,f_{j_{1},j_{2}}^{2}$ entries of $\partial^C$.  For ease of exposition in the rest of the proof, we drop the bounds on the sums in (\ref{eq:2}), and we denote by $\sum_{j_p}$ the sum from $j_p=a_p$ to $j_p=b_p$, for each $p$.   
\newline
\indent Furthermore, since 
\begin{align*}
&\partial^C(\alpha\otimes\beta_1\otimes...\otimes\beta_{M})=\partial^K(\alpha)\otimes\beta_1\otimes...\otimes\beta_{M}+\sum_{i=1}^{M}(-1)^{|\alpha|+...+|\beta_i|}\alpha\otimes\beta_1\otimes...\otimes\partial^{F^i}(\beta_i)\otimes...\otimes\beta_{M},
\end{align*}  
our chosen order of variables and Lemma \ref{lemmaoperators} imply that the only nonzero terms in (\ref{eq:2}) are the ones such that 
\begin{align*}
f_{j_{i-1},j_i}^i,...,f_{j_{i-i_{M}},j_{i-i_{M}+1}}^{i-i_{M}+1} \,\,\,&\text{are entries of}\,\,\, \partial^{F^{M}} \\
f_{j_{i-i_{M}-1},j_{i-i_{M}}}^{i-i_{M}},...,f_{j_{i-i_{M}-i_{M-1}},j_{i-i_{M}-i_{M-1}+1}}^{i-i_{M}-i_{M-1}+1}\,\,\,&\text{are entries of}\,\,\, \partial^{F^{M-1}} \\
\vdots \\
f_{j_{\ell+i_1},j_{\ell+i_1+1}}^{\ell+i_1+1},...,f_{j_{\ell+1},j_{\ell+2}}^{\ell+2} \,\,\,&\text{are entries of}\,\,\, \partial^{F^1} \\
f_{j_{\ell},j_{\ell+1}}^{\ell+1},...,f_{j_1,j_2}^{2} \,\,\,&\text{are entries of}\,\,\, \partial^{K}.
\end{align*}\label{eq:7}
where here we identify each $\partial^{F^p}$ with a matrix with respect to our fixed bases.  Notice that (\ref{eq:2}) can be written as
\begin{align}\label{eq:8}
g=\sum_{j_{\ell+1}}...\sum_{j_{i-1}}d^{k_i}(f_{j_{i-1},j_i}^i)...d^{k_{\ell+2}}(f_{j_{\ell+1},j_{\ell+2}}^{\ell+2})\left(\sum_{j_2}...\sum_{j_{\ell}}d^{k_{\ell+1}}(f_{j_{\ell},j_{\ell+1}}^{\ell+1})...d^{1}(x_1x_2)dx_{1}...dx_{k_{\ell+1}}\right)dx_{k_{\ell+2}}...dx_{k_i}.
\end{align}
Next note that the sum over $j_{\ell+1}$ can be removed.  Indeed $f_{j_{\ell+1},j_{\ell+2}}^{\ell+2}$ is an entry of $\partial_{1}^{F^1}$ and since $F^1$ is the minimal graded free resolution of a cyclic module only one index $j_{\ell+1}$ is possible.  The last group of sums in (\ref{eq:8}) can be pulled out to yield  
\begin{align*}
g=\left(\sum_{j_2}...\sum_{j_{\ell}}d^{k_{\ell+1}}(f_{j_{\ell},j_{\ell+1}}^{\ell+1})...d^{1}(x_1x_2)dx_{1}...dx_{k_{\ell+1}}\right)\left(\sum_{j_{\ell+2}}...\sum_{j_{i-1}}d^{k_i}(f_{j_{i-1},j_i}^i)...d^{k_{\ell+2}}(f_{j_{\ell+1},j_{\ell+2}}^{\ell+2})dx_{k_{\ell+2}}...dx_{k_i}\right).
\end{align*}
We observe that the homology class of the first factor is precisely $\Phi(\alpha\otimes\bar{1})$.  Repeating the procedure above with the sums in the second factor and then taking homology classes, we find that
\begin{align*}
h=\Phi(\alpha\otimes\bar{1})\cdot\Phi(\beta_{1}\otimes\bar{1})\cdot....\cdot\Phi(\beta_{M}\otimes\bar{1})
\end{align*} 
by Remark \ref{remksubcx2}.  Since $\beta_1,...,\beta_{M}$ are basis elements of $F^{G_1}$, a similar inductive argument to the one given in the first case implies that
\begin{align*}
h&=\Phi(\alpha\otimes\bar{1})\cdot\left(\sum_{\lambda\in\Lambda} c_{\lambda} \prod_{\ell,m}\Phi((\beta_{1})_{\ell}^{m,m+1}\otimes\bar{1})^{\lambda_{\ell,m}}\right)\cdot...\cdot\left(\sum_{\lambda\in\Lambda} c_{\lambda} \prod_{\ell,m}\Phi((\beta_{M})_{\ell}^{m,m+1}\otimes\bar{1})^{\lambda_{\ell,m}}\right)
\end{align*}  
where each $(\beta_{q})_{\ell}^{m,m+1}$ is a basis element of $F_m$ of internal degree $m+1$.  In addition, by Corollary \ref{cornewlinstrgen}, $\Phi(\alpha\otimes\bar{1})$ is a generator of $H_{\ell+1}(R)$ that is in $H_{\ell+1}(R)_{\ell+2}$.  Therefore $h$ is in the subalgebra generated by the lowest linear strand.
\end{proof}

Since paths are trees, this recovers \cite[Thm 3.15]{boocheretal}.  Now consider a forest $G$.  By definition, $G$ is a disjoint union of trees, $T_1$,...,$T_m$.  Thus, the quotient of the edge ideal of $G$ is of the form
\begin{align*}
Q/I_G=Q_1/I_{T_1}\underset{k}{\otimes}...\underset{k}{\otimes}Q_m/I_{T_m}
\end{align*}
where $Q_1,...,Q_m$ are polynomial rings on disjoint sets of variables such that $Q=Q_1\otimes_k...\otimes_k Q_m$.  This induces an isomorphism on the Koszul homology algebras
\begin{align*}
H(Q/I_G)\cong H(Q_1/I_{T_1})\underset{k}{\otimes}...\underset{k}{\otimes}H(Q_m/I_{T_m}),
\end{align*}
thus yielding the following corollary as a direct consequence of Theorem \ref{thmmain}.

\begin{cor}
If $R=Q/I_G$ and $G$ is a forest, then $H(R)$ is generated by $\bigoplus_i H_i(R)_{i+1}$ as $k$-algebras.\hfill\qed
\end{cor}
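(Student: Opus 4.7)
The plan is to induct on the number of edges of $G$. The base case of a single edge gives $I_G=(x_1x_2)$, whose Koszul homology lives entirely in $H_1(R)_2$, so the conclusion is trivial. For the inductive step, I exploit the decomposition $\mathbb{F}=\mathrm{cone}(\phi\colon F^C(-2)\to F^{G_1})$ from Section 3, which gives $\mathbb{F}_i=F^{G_1}_i\oplus F^C_{i-1}$. Fixing the Herzog--Maleki basis (Theorem \ref{thmhm}), it suffices to show that every class $h=\Phi(e\otimes\bar 1)$ with $e$ a basis element of $\mathbb{F}_i$ lies in the subalgebra of $H(R)$ generated by $\bigoplus_j H_j(R)_{j+1}$. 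Split into the two cases depending on whether $e$ comes from $F^{G_1}_i$ or from $F^C_{i-1}$.

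If $e\in F^{G_1}_i$, then $G_1=G\setminus x_1$ is a forest with strictly fewer edges, so the induction hypothesis applies: $H(Q/I_{G_1})$ is generated by its lowest linear strand. By Lemma \ref{lemmainducedalgmap}, the canonical surjection $Q/I_{G_1}\to R$ induces a $k$-algebra map $\theta\colon H(Q/I_{G_1})\to H(R)$ with $\theta(\Phi_{G_1}(e\otimes\bar 1))=h$, and $\theta$ sends lowest-linear-strand classes to lowest-linear-strand classes. Hence writing $\Phi_{G_1}(e\otimes\bar 1)$ as a polynomial in basis elements on the lowest linear strand and applying $\theta$ expresses $h$ as a polynomial in $\bigoplus_j H_j(R)_{j+1}$.

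If $e\in F^C_{i-1}$, then Lemma \ref{lemmatensorresn} gives $e=\alpha\otimes\beta_1\otimes\cdots\otimes\beta_M$, with $\alpha$ a Koszul basis element in $K(x_{2,1},\dots,x_{2,r};Q)$ and each $\beta_p$ a basis element of $F^p\subseteq F^{G_1}$ (Remark \ref{remksubcx2}). The key claim is the multiplicative identity
\[
h \;=\; \Phi(\alpha\otimes 1\otimes\cdots\otimes 1\otimes\bar 1)\cdot\Phi(\beta_1\otimes\bar 1)\cdots\Phi(\beta_M\otimes\bar 1)
\]
in $H(R)$. Once this is proved, the first factor lies on the lowest linear strand by Corollary \ref{cornewlinstrgen}, and each $\Phi(\beta_p\otimes\bar 1)$ is handled by the first case applied to the basis element $\beta_p$ of $F^{G_1}$. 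Thus $h$ lies in the desired subalgebra.

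To establish the claim, I write down $z_{i,j}$ from Theorem \ref{thmhm} and harvest all the vanishing that Lemma \ref{lemmaoperators} provides, using the total order $x_1<x_2<x_{2,\ell}<v(T_1)<\cdots<v(T_M)$ on the variables. Three structural observations drive the analysis: every entry of $\phi$ is a multiple of $x_1$ (Proposition \ref{propphi}); every entry of $\partial^K$ is one of $x_{2,1},\dots,x_{2,r}$; and every entry of $\partial^{F^p}$ involves only variables from $T_p$. Since the composite $\partial$ of $\mathbb{F}$ is block triangular by \eqref{eq:1}, each product $d^{k_i}(f^i_{j_{i-1},j_i})\cdots d^{k_1}(f^1_{1,j_1})$ in $z_{i,j}$ corresponds to a unique ``path'' through the blocks; with the chosen ordering, Lemma \ref{lemmaoperators} forces the single $\phi$-factor to appear at position $1$ with $k_1=1$, and forces each remaining block's indices $k_p$ to lie among the variables of the corresponding subtree. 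The wedge $dx_1\,dx_{k_2}\cdots dx_{k_i}$ therefore splits along the block decomposition, and after collecting terms the full multi-sum factors into a product of sub-sums, each of which is exactly the Herzog--Maleki representative of one of $\Phi(\alpha\otimes\bar 1),\Phi(\beta_1\otimes\bar 1),\dots,\Phi(\beta_M\otimes\bar 1)$.

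The main obstacle is precisely this bookkeeping step: extracting the factorization of $z_{i,j}$ from the telescoped Herzog--Maleki sum. Once the variable ordering is fixed, the vanishing of $d^r$ on monomials (Lemma \ref{lemmaoperators}) makes most summands zero, but one still has to track which index ranges survive, verify that transitions between blocks occur in a unique forced location, and rearrange the remaining sum so that it manifestly decomposes as a product. Everything else in the argument is either induction or direct invocation of a lemma already proved.
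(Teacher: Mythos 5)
Your proposal re-runs the proof of Theorem~\ref{thmmain} with the hypothesis ``tree'' weakened to ``forest,'' rather than deducing the forest case from the tree case the way the paper does. The paper's proof of this corollary is a two-line argument: a forest $G$ is a disjoint union of trees $T_1,\dots,T_m$ on disjoint variable sets, so $Q/I_G \cong Q_1/I_{T_1}\otimes_k\cdots\otimes_k Q_m/I_{T_m}$, and the K\"unneth formula gives $H(Q/I_G)\cong H(Q_1/I_{T_1})\otimes_k\cdots\otimes_k H(Q_m/I_{T_m})$ as bigraded $k$-algebras; since each factor is generated by its lowest linear strand (Theorem~\ref{thmmain}), so is the tensor product. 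Your route never invokes Theorem~\ref{thmmain} at all and instead reproves it in the forest setting.

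That detour introduces a real gap. Every ingredient you cite from Section~3 --- the decomposition $\mathbb{F}=\mathrm{cone}(\phi\colon F^C(-2)\to F^{G_1})$, Lemma~\ref{lemmatensorresn} giving $F^C$ as a tensor product, Proposition~\ref{propphi} defining $\phi$, Remark~\ref{remksubcx2}, and Corollary~\ref{cornewlinstrgen} --- is stated and proved in the paper under the standing hypothesis that $G$ is a \emph{tree}. For a disconnected forest $G$, the colon ideal $C=(I_{G_1}:x_1x_2)$ picks up the full edge ideals of the components not containing $x_1$, so the description $C=(x_{2,1},\dots,x_{2,r})+\sum_{i=1}^M I_{T_i}$ and the identification of the tensor factors in Lemma~\ref{lemmatensorresn} need to be restated and re-verified with an enlarged family of trees. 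These extensions are plausible and would likely go through by the same arguments, but you assert them without checking, and in fact none of that work is necessary: the K\"unneth reduction to the already-proved tree case is both shorter and immune to this issue. I would replace the inductive reprise with the tensor-product argument, or, if you insist on the inductive route, explicitly restate and re-derive the Section~3 lemmas for forests before invoking them.
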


\section*{\centering\normalsize{\normalfont{\uppercase{Acknowledgments}}}}
The author thanks her advisor, Claudia Miller, for guidance and support and Srikanth Iyengar for suggesting studying Question \ref{quest}. The author is also grateful for helpful conversations with Aldo Conca, Eloisa Grifo, Adam Boocher, and Matthew Mastroeni in the early stages of this project.

\bibliographystyle{siam}
\bibliography{edgeidealsv12}
\nocite{*}

\vspace{1.25cm}

\noindent Department of Mathematics, Syracuse University, Syracuse, NY, 13244
\newline
\textit{email}: rngettin@syr.edu

 \end{document}